\newtheorem{theo}{Theorem}[section]
\newtheorem{cor}[theo]{Corollary}
\newcommand{\ra}{\rightarrow}
\theoremstyle{definition}
\newtheorem{defin}[theo]{Definition}
\newtheorem{exa}{Example}[section]
\theoremstyle{remark}
\newtheorem{rem}[theo]{Remark}
\begin{document}

\title{Random walks under
slowly varying moment conditions on groups of polynomial volume growth}
\author{Laurent Saloff-Coste\thanks{%
Both authors partially supported by NSF grants 
DMS 1004771 and DMS 1404435} \\
{\small Department of Mathematics}\\
{\small Cornell University} \and Tianyi Zheng \\
{\small Department of Mathematics}\\
{\small Stanford University} }
\maketitle

\begin{abstract}
Let $G$ be a finitely generated group of polynomial volume growth 
equipped with a word-length $|\cdot|$.
The goal of this paper is to develop techniques to study the behavior 
of random walks driven by symmetric measures $\mu$ such that, 
for any $\epsilon>0$, 
$\sum|\cdot|^\epsilon\mu=\infty$. In particular, we provide a sharp lower 
bound for the return probability  in the case when $\mu$ 
has a finite weak-logarithmic moment.
\end{abstract}

\section{Introduction}

\setcounter{equation}{0}

Let $G$ be a finitely generated group. Let $S=(s_1,\dots,s_k)$ be a generating 
$k$-tuple and $S^*=\{e,s_1^{\pm1},\cdots, s_k^{\pm 1}\}$ 
be the associated symmetric generating set. 
Let $|\cdot|$ be the associated word-length 
so that $|g|$ is the least integer 
$m$ such that $g=\sigma_1\dots\sigma_m$ with 
$\sigma_i \in \mathcal S^*$ (and the convention that $|e|=0$).

Given two monotone functions $f_1,f_2$, write $f_1\simeq f_2$ 
if there exists $c_i\in (0,\infty)$ such that 
$c_1f_1(c_2t)\le f_2(t)\le c_3f_1(c_4t)$ on the domain of definition of $f_1,f_2$
(usually, $f_1,f_2$ are defined on a neighborhood of $0$ or infinity and tend to $0$ or infinity at either $0$ or infinity. In some cases, 
one or both functions are defined only on a countable set such as $\mathbb N$). 

In \cite{PSCstab} it is proved that there exists a function 
$\Phi_G:\mathbb N\ra (0,\infty)$ such that, 
if $\mu$ is a symmetric probability measure with generating 
support and finite second moment, that is $\sum |g|^2\mu(g)<\infty$, then
$$\mu^{(2n)}(e)\simeq \Phi_G(n).$$

In \cite{BSClmrw}, A. Bendikov and the first author considered the question of
finding lower bounds for the probability of return $\mu^{(2n)}(e)$ when $\mu$
is only known to have a finite moment of some given exponent lower than $2$. 
Very generally, let $\rho: [0,\infty)\ra [1,\infty)$ be  given.

We say that a measure $\mu$ has finite $\rho$-moment if 
$\sum \rho(|g|)\mu(g)<\infty$. We say that $\mu$ has finite weak-$\rho$-moment
if 
\begin{equation} \label{weak}
W(\rho,\mu)
:= \sup_{s>0} \left\{s \mu(\{g: \rho(|g|) >s\})\right\} <\infty. \end{equation}

\begin{defin}[Fastest decay under $\rho$-moment] \label{def1}
Let $G$ be a countable group.
Fix a function $\rho:[0,\infty)\ra[1,\infty)$ with $\rho(0)= 1$.
Let $\mathcal S_{G,\rho}$ be the set of all symmetric
probability  $\phi$ on $G$ with the properties that
$\sum \rho(|g|)\phi(g) \le 2$. Set
$$\Phi_{G,\rho}: n\mapsto \Phi_{G,\rho}(n)= \inf\left\{
\phi^{(2n)}(e) : \phi \in \mathcal S_{G,\rho}\right\}. $$
\end{defin}
In words, $\Phi_{G,\rho}$ provides the best lower bound valid 
for all convolution powers of probability measures in 
$\mathcal S _{G,\rho}$. The following variant deals with finite weak-moments.

\begin{defin}[Fastest decay under weak-$\rho$-moment] \label{def2}
Let $G$ be a countable group.
Fix a function $\rho:[0,\infty)\ra[1,\infty)$ with $\rho(0)=1$.
Let $\widetilde{\mathcal S}_{G,\rho}$ be the set of all symmetric
probability   $\phi$ on $G$ with the properties that
$W(\rho,\phi)\le 2$. Set
$$\widetilde{\Phi}_{G,\rho}: n\mapsto \widetilde{\Phi}_{G,\rho}(n)= 
\inf\left\{
\phi^{(2n)}(e) : \phi \in \widetilde{\mathcal S}_{G,\rho}\right\}. $$
\end{defin}

\begin{rem} Since $\rho$ takes values in $[1,\infty)$, it follows that,
for any probability measure $\mu$ on $G$, 
we have $\sum\rho(|g|)\mu(g)\ge 1$ and $W(\rho,\mu)\ge 1$.  
In the definitions of $\Phi_{G,\rho}$  (resp. $\widetilde\Phi_{G,\rho}$), 
it is important to impose a uniform bound of the type 
$\sum \rho(|g|)\mu(g)\le 2$  (resp. $W(\rho,\mu)\le 2$) because   
relaxing this condition to $\sum \rho(|g|)\mu(g)<\infty $  
(resp. $W(\rho,\mu)< \infty$) would lead to a trivial  
$\Phi_{\rho,G}\equiv 0$ (resp. $\widetilde{\Phi}_{\rho,G}\equiv 0$). The
next remark indicates that, under natural circunstances, the choice of 
the particular constant $2$ in these definitions is unimportant. 
\end{rem}

\begin{rem} Assume that $\rho$ has the property that 
$\rho(x+y)\le C( \rho(x)+\rho(y))$. Under this natural condition 
\cite[Cor 2.3]{BSClmrw} shows that $\Phi_{G,\rho} $ and 
$\widetilde{\Phi}_{G,\rho}$ stay strictly positive. Further,
\cite[Prop 2.4]{BSClmrw} shows that, for any symmetric probability measure 
$\mu$ on $G$ such that $\sum \rho(|g|)\mu(g)<\infty$
(resp. $W(\rho,\mu)<\infty$), 
there exist constants $c_1,c_2$ (depending on $\mu$)
 such that
$$\mu^{(2n)}(e)\ge c_1 \Phi_{G,\rho}(c_2 n)$$
 (resp. $\mu^{(2n)}(e)\ge c_1 \widetilde{\Phi}_{G,\rho}(c_2n)$).
\end{rem}

Recall that a group $G$ is said to have polynomial (volume)  growth of 
degree $D$ if $V(n)=\#\{g\in G: |g|\le n\}\simeq n^D$. 
By a celebrated theorem of M. Gromov, a group $G$ such that $V(n)\le Cn^A$ 
for some fixed constants $C,A$ and all integers $n$  must be of polynomial growth of degree $D$ for some integers $D\in\{0,1,2,\dots\}$. In fact, 
Gromov's theorem states that such a group is virtually nilpotent, i.e., 
contains a nilpotent subgroup of finite index. See, e.g., \cite{delaH,Grom} and 
the references therein.
One of the most basic results proved in \cite{BSClmrw} is as follows.
\begin{theo}[\cite{BSClmrw}] \label{th-BSC}
Let $G$ have polynomial volume growth of 
degree $D$. For any $\alpha\in (0,2)$, let $\rho_\alpha(s)=(1+s)^\alpha$.
Then we have
$$\forall\,n\ge 1,\;\;\widetilde{\Phi}_{G,\rho_\alpha}(n)
\simeq n^{-D/\alpha}.$$
Moreover, if $\rho(s)\simeq [(1+s^2)\ell(1+s^2)]^\gamma$ with $\gamma\in (0,1)$
and $\ell$ smooth positive slowly varying at infinity 
with de Bruijn conjugate $\ell^\#$ then 
$$\forall\,n\ge 1,\;\;\widetilde{\Phi}_{G,\rho}(n)\simeq 
[n^{1/\gamma}\ell^{\#}(n^{1/\gamma})]^{-D/2}.$$
\end{theo}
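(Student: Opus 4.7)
The plan is to establish matching upper and lower bounds on $\widetilde{\Phi}_{G,\rho}$. The upper bound is witnessed by explicit subordinated measures built from a fixed symmetric, finitely supported generating measure $\mu$ on $G$, for which $\mu^{(2n)}(e)\simeq n^{-D/2}$ by the Pittet--Saloff-Coste stability result \cite{PSCstab} quoted above. The lower bound is derived from a Nash-type functional inequality extracted uniformly from the weak moment condition on $\widetilde{\mathcal S}_{G,\rho}$.

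\textbf{Upper bound via subordination.} For part (i), let $(q_k)_{k\ge 1}$ be the one-step distribution of a discrete $(\alpha/2)$-stable subordinator, so $q_k\simeq k^{-1-\alpha/2}$. Put
\[
\mu_\alpha \;=\; \sum_{k=1}^\infty q_k\, \mu^{(k)}.
\]
Stable-tail estimates give $\mu_\alpha(\{|g|>r\})\simeq r^{-\alpha}$, hence $W(\rho_\alpha,\mu_\alpha)<\infty$. For the iterated kernel,
\[
\mu_\alpha^{(n)}(e) \;=\; \sum_k q^{(n)}_k\, \mu^{(k)}(e) \;\simeq\; \mathbb{E}\bigl[S_n^{-D/2}\bigr] \;\simeq\; n^{-D/\alpha},
\]
using the $(\alpha/2)$-stable scaling $S_n\sim n^{2/\alpha}S_1$. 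This gives $\widetilde{\Phi}_{G,\rho_\alpha}(n)\le Cn^{-D/\alpha}$. For part (ii) one replaces the subordinator by one of Laplace exponent comparable to $(s\ell(s))^\gamma$; Karamata inversion then introduces the de Bruijn conjugate $\ell^\#$ and yields the rate $[n^{1/\gamma}\ell^\#(n^{1/\gamma})]^{-D/2}$ along the same lines.

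\textbf{Lower bound via Nash inequality.} Any $\phi\in\widetilde{\mathcal S}_{G,\rho_\alpha}$ satisfies $\phi(\{|g|>r\})\le 2r^{-\alpha}$. Decompose $\phi=\phi_r+\phi^r$ with $\phi_r=\phi\cdot\mathbf{1}_{\{|g|\le r\}}$; then $\sum_g|g|^2\phi_r(g)\le Cr^{2-\alpha}$. Combining this second-moment bound with the Nash inequality of order $D/2$ available for the reference walk $\mu$ (which is equivalent to $\mu^{(2n)}(e)\simeq n^{-D/2}$), and optimizing over $r$, produces the $\alpha$-Nash inequality
\[
\|f\|_2^{2+2\alpha/D} \;\le\; C\,\mathcal E_\phi(f,f)\,\|f\|_1^{2\alpha/D}
\]
valid for every finitely supported $f$ on $G$. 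The classical Nash argument then delivers $\phi^{(2n)}(e)\ge cn^{-D/\alpha}$ uniformly over $\widetilde{\mathcal S}_{G,\rho_\alpha}$. Part (ii) uses the same truncation but with a slowly varying tail, producing a Nash-type inequality with a slowly varying correction; inverting the resulting recursion for $u_n=\phi^{(2n)}(e)$ via $\ell^\#$ yields the stated decay.

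\textbf{Main obstacle.} The delicate step is the lower bound in part (ii): recovering precisely the factor $\ell^\#(n^{1/\gamma})$ requires sharp Karamata-style bookkeeping through the truncation-plus-Nash argument, uniformly in $\phi\in\widetilde{\mathcal S}_{G,\rho}$. The fact that on $G$ the volume $V(n)$ is only comparable to, rather than equal to, $n^D$ must also be propagated through these estimates without corrupting the slowly varying correction, which is where the full strength of Karamata's theory and the de Bruijn conjugate machinery enters.
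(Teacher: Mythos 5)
This statement is quoted verbatim from \cite{BSClmrw} and is not proved in the present paper; there is no ``paper's own proof'' to compare against. I can still assess your argument against the approach in \cite{BSClmrw}, which is the one the present paper builds on.

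Your upper bound via subordination is essentially correct and is the route taken in \cite{BSClmrw}: the $(\alpha/2)$-subordinate $\mu_\alpha$ of a finitely supported symmetric generating $\mu$ has $\mu_\alpha(\{|g|>r\})\simeq r^{-\alpha}$ (hence a finite weak $\rho_\alpha$-moment) and $\mu_\alpha^{(n)}(e)\simeq n^{-D/\alpha}$, so it witnesses the upper bound on $\widetilde{\Phi}_{G,\rho_\alpha}$. The regularly varying case (ii) likewise works by tuning the Laplace exponent, with $\ell^\#$ entering through Karamata inversion.

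Your lower bound argument has a genuine gap, and it is not a small bookkeeping issue but a direction error. A Nash inequality of the form $\|f\|_2^{2+2\alpha/D}\le C\,\mathcal E_\phi(f,f)\,\|f\|_1^{2\alpha/D}$ is an ultracontractivity estimate and yields an \emph{upper} bound $\phi^{(2n)}(e)\le Cn^{-D/\alpha}$, not the \emph{lower} bound $\phi^{(2n)}(e)\ge cn^{-D/\alpha}$ that is needed to control $\widetilde{\Phi}_{G,\rho_\alpha}$ from below. Worse, the claimed Nash inequality cannot possibly hold uniformly over $\widetilde{\mathcal S}_{G,\rho_\alpha}$: the measure $\phi=(1-\epsilon)\delta_e+\epsilon\nu$ for tiny $\epsilon$ lies in the class but has $\mathcal E_\phi$ arbitrarily small, so no such lower bound on $\mathcal E_\phi$ can be extracted. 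The truncation estimate you write down, namely $\mathcal E_\phi(f,f)\le C r^{2-\alpha}\mathcal E_\mu(f,f)+Cr^{-\alpha}\|f\|_2^2$, in fact bounds $\mathcal E_\phi$ from \emph{above}, which is exactly what one wants: choosing $r$ and recognizing the right-hand side as (comparable to) the Dirichlet form $\mathcal E_{\mu_\alpha}$ of the $\alpha$-subordinate, one gets $\mathcal E_\phi\le C\mathcal E_{\mu_\alpha}$, and then the Pittet--Saloff-Coste comparison \cite{PSCstab} converts this into $\phi^{(2n)}(e)\ge c\,\mu_\alpha^{(2Nn)}(e)\simeq n^{-D/\alpha}$. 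This Dirichlet-form comparison, implemented via the pseudo-Poincar\'e inequality for the reference walk, is precisely the mechanism of \cite[Theorem 2.10]{BSClmrw}, which the present paper restates as Theorem \ref{th-W} and extends in Theorem \ref{th-PPpol}. Your truncation computation is the right starting point; you just aimed it at a Nash inequality for $\phi$ (false, and wrong direction) instead of at a Dirichlet-form domination $\mathcal E_\phi\le C\mathcal E_{\mu_\alpha}$ (true, and in the right direction).
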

\begin{rem}
This statement involves the notion of de Bruijn conjugate $\ell^{\#}$
of a positive 
slowly varying function $\ell$. 
We refer the reader to \cite[Theorem 1.5.13]{BGT}
for the definition and existence of the de Bruijn conjugate. Roughly speaking,
$\ell^{\#}$ is such that the inverse function of $s\mapsto s\ell(s)$ is 
$s\mapsto s\ell^{\#} (s)$. When $\ell $ is so slow that 
$\ell(s^a)\simeq \ell(s)$ for any $a>0$, then $\ell^{\#}\simeq 1/\ell$. 
For further results on de Bruijn conjugate, see \cite{BGT}.
\end{rem}

In the case when $\rho$ is slowly varying, \cite{BSClmrw} provides 
only partial results. In particular, the techniques of \cite{BSClmrw}
fail to give any kind of lower bound when $\rho(s)=\log( e+ s)^{ \epsilon}$ 
with $\epsilon\in (0,1]$ and for any $\rho$ that varies even slower than
these examples. The main goal of this work is to obtain 
detailed results in such  cases including the following theorem.
\begin{theo} \label{th-main}
Let $G$ have polynomial volume growth of 
degree $D$. For any $\epsilon>0$ we have
$$\widetilde{\Phi}_{G,\log^{\epsilon }}(n)\simeq 
\exp\left( - n^{1/(1+\epsilon)}\right)$$
where $\log^\epsilon$ stands for the function  
$\rho^{\log}_\epsilon(s)=[1+\log (1+s)]^\epsilon$.  Further, for any $k\ge 2$,
$$\widetilde{\Phi}_{G,(1+\log_{[k]})^{\epsilon }}(n)\simeq 
\exp\left( - n/(\log_{[k-1]}n)^\epsilon\right)$$
where $\log_{[k]}(x)=\log (1+\log_{[k-1]} x)$, $\log_{[0]} x=x$.
\end{theo}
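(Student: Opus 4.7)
My plan is to prove matching upper and lower bounds on $\widetilde{\Phi}_{G,\rho}$ for the specified $\rho$: an upper bound by exhibiting an explicit $\mu \in \widetilde{\mathcal S}_{G,\rho}$ whose return probability realizes the target rate, and a matching lower bound valid for every $\phi \in \widetilde{\mathcal S}_{G,\rho}$. I will treat the main case $\rho(s) = [1+\log(1+s)]^\epsilon$ in detail; the iterated-log cases are analogous and differ only in the final optimization step.

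For the upper bound, I would take $\mu$ to be a mixture of uniform probabilities on balls at exponentially spaced scales. Setting $K_j = 2^j$ and writing $\eta_K$ for the uniform probability on $B(e,K)\subset G$, let $\mu = c\sum_{j\ge 1} j^{-(1+\epsilon)} \eta_{K_j}$ with $c$ chosen to normalize. Since $\mu(\{|g|>K_j\}) \asymp 1/j^\epsilon \asymp 1/\rho(K_j)$, adjusting $c$ ensures $W(\rho,\mu)\le 2$. To estimate $\mu^{(2n)}(e)$, I would pass to the nilpotent Lie model supplied by Gromov's theorem. In the abelian case $G = \mathbb Z^D$, the Fourier transform satisfies $1-\hat\mu(\xi)\asymp (\log 1/|\xi|)^{-\epsilon}$ for small $|\xi|$, and a Laplace evaluation of $\int\hat\mu(\xi)^{2n}d\xi$ gives $\mu^{(2n)}(e) \lesssim \exp(-c n^{1/(1+\epsilon)})$. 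The general polynomial-growth case reduces to this via Dirichlet-form comparison, along the lines of \cite{PSCstab}.

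For the lower bound, fix $\phi\in\widetilde{\mathcal S}_{G,\rho}$ and a cutoff scale $K$. Decompose $\phi = \phi^{\mathrm{loc}}_K + \phi^{\mathrm{tail}}_K$ with $\phi^{\mathrm{loc}}_K = \phi\,\mathbf{1}_{B(e,K)}$, and set $p_K := \phi^{\mathrm{tail}}_K(G) \le 2/\rho(K)$. A layer-cake computation yields the truncated variance $\sigma_K^2 := \sum_{|g|\le K}|g|^2\phi(g) \lesssim K^2/\rho(K)$. The key path inequality is
\[
\phi^{(2n)}(e)\;\ge\;(1-p_K)^{2n}\,\bigl[(1-p_K)^{-1}\phi^{\mathrm{loc}}_K\bigr]^{(2n)}(e),
\]
obtained by restricting to trajectories whose every increment lies in $B(e,K)$. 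The first factor is $\ge \exp(-C n/\rho(K))$. For the second, the renormalized step distribution has finite second moment $\lesssim\sigma_K^2$, so a quantitative Gaussian-type lower bound à la Pittet--Saloff-Coste gives $\bigl[(1-p_K)^{-1}\phi^{\mathrm{loc}}_K\bigr]^{(2n)}(e) \gtrsim c(\sigma_K^2 n)^{-D/2}$, which is at least $K^{-D}$ times polynomial factors in $n$. Optimising over $K$ by setting $\log K\asymp n^{1/(1+\epsilon)}$ balances $n/\rho(K)$ against $D\log K$ and yields $\phi^{(2n)}(e)\gtrsim \exp(-c' n^{1/(1+\epsilon)})$. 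For $\rho = (1+\log_{[k]})^\epsilon$ with $k\ge 2$, the same argument applies with $\rho(K)\asymp(\log_{[k-1]} K)^\epsilon$; minimizing $n/\rho(K) + D\log K$ yields $\log K \asymp n/(\log n)^{\epsilon+1}$ up to iterated-log corrections, at which point $n/\rho(K)$ dominates and produces the rate $\exp(-c n/(\log_{[k-1]} n)^\epsilon)$.

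The main technical obstacle is the quantitative version of the Pittet--Saloff-Coste lower bound invoked in the second factor: one needs the constant in $(\sigma_K^2 n)^{-D/2}$ to depend on $\sigma_K$ only polynomially, uniformly over the whole family of truncated, renormalized measures $\phi^{\mathrm{loc}}_K/(1-p_K)$ as $\phi$ and $K$ vary. On $\mathbb Z^D$ this follows from the local central limit theorem, but on a general polynomial-growth $G$ it calls for a careful Dirichlet-form/Nash-inequality comparison with the intrinsic Gaussian semigroup on $G$, explicitly tracking the variance dependence rather than merely asserting positivity of some constant.
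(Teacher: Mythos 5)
Your lower bound strategy (truncate $\phi$ at scale $K$, multiply the all-increments-local probability $(1-p_K)^{2n}$ by a Gaussian on-diagonal lower bound for the renormalized truncation, then optimize over $K$) is genuinely different from the paper's route: the paper works entirely at the Dirichlet-form/$L^2$ level via subordination, constructing a complete Bernstein function $\psi$ so that a reference measure $\varphi$ is the $\alpha$-subordinate of $\phi_\psi$, proving a pseudo-Poincar\'e inequality $\sum_x|f(xg)-f(x)|^2\le C\theta(1+|g|^2)\mathcal E_\varphi(f,f)$ for that specific radial $\varphi$ (Theorems \ref{th-PP}--\ref{th-PPpol}), and then invoking the comparison machinery of \cite[Theorem~2.10]{BSClmrw} to dominate $\mathcal E_\mu$ by $\mathcal E_{\varphi_\alpha}$ for any $\mu$ with the weak moment. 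This avoids ever having to estimate return probabilities of truncations of an arbitrary $\mu$. Your upper bound (mixture of uniform balls, Fourier on an abelian quotient) is essentially equivalent to the paper's construction of $\varphi(g)\simeq[(1+|g|)^D\ell(1+|g|^2)]^{-1}$ via subordination and is fine modulo details.

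The gap in your lower bound is the step you flag yourself: you need
$$\bigl[(1-p_K)^{-1}\phi^{\mathrm{loc}}_K\bigr]^{(2n)}(e)\ \ge\ c\,(\sigma_K^2\,n)^{-D/2}$$
with a constant $c$ that depends on $G$ alone, uniformly over all $\phi\in\widetilde{\mathcal S}_{G,\rho}$ and all cutoffs $K$. On $\mathbb Z^D$ this is an easy Fourier computation (restrict the integral of $\hat\nu^{2n}$ to $|\xi|<1/\sigma_K$, where $1-\hat\nu(\xi)\le\tfrac12\sigma_K^2|\xi|^2$). But the references you gesture at (\cite{PSCstab}, Nash inequalities, Hebisch--Saloff-Coste) give on-diagonal lower bounds with constants that are \emph{not} uniform in the measure: they rely on a pseudo-Poincar\'e inequality \emph{for the measure at hand}, and the renormalized truncations $\phi^{\mathrm{loc}}_K/(1-p_K)$ are completely arbitrary symmetric measures supported in a ball --- they may fail to generate, may charge only a subgroup, or may be concentrated near a single long element, so one cannot quote an off-the-shelf pseudo-Poincar\'e inequality for them. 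The classical route to the diffusive lower bound is Cauchy--Schwarz, $\nu^{(2n)}(e)\ge\mathbb P(|X_{2n}|\le R)^2/V(R)$, followed by a diffusive escape estimate $\mathbb P(|X_{2n}|>R)\le Cn\sigma^2/R^2$; on nonabelian polynomial-growth groups the subadditive triangle inequality alone only yields the ballistic bound $\mathbb E[|X_n|^2]\le n^2\sigma^2$, and upgrading it to a diffusive bound with a uniform constant is a nontrivial piece of analysis that is precisely what the paper's Section~4 pseudo-Poincar\'e machinery is designed to replace. So while I believe the inequality you need is \emph{true}, writing ``a careful Dirichlet-form/Nash-inequality comparison\ldots explicitly tracking the variance dependence'' names the missing lemma rather than supplying it, and that lemma is roughly as hard as the part of the paper your argument is meant to bypass. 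Everything else (the computation $\sigma_K^2\lesssim K^2/\rho(K)$ from the weak moment, the pointwise comparison $\phi^{(2n)}\ge(\phi^{\mathrm{loc}}_K)^{(2n)}$, the optimization $\log K\asymp n^{1/(1+\epsilon)}$ and its iterated-log analogues) is sound, apart from an index slip where you write $\rho(K)\asymp(\log_{[k-1]}K)^\epsilon$ instead of $(\log_{[k]}K)^\epsilon$ --- this does not change the outcome since $\log_{[k]}K=\log_{[k-1]}(\log K)\approx\log_{[k-1]}n$ at the optimal scale.

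Finally, note that the paper does not prove Theorem~\ref{th-main} directly: it derives it from the more general Theorem~\ref{th-mainl}, which identifies $\widetilde\Phi_{G,\theta_2}(n)\simeq\varphi^{(n)}(e)$ for a whole class of slowly varying $\theta_2$ and exhibits an explicit optimizing measure $\varphi$. Your truncation argument, if repaired, would recover the rate but not so transparently the identification of a witness.
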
 
The upper bound on $\widetilde{\Phi}_{G,{\log}^\epsilon}$ is contained 
in \cite{BSClmrw,BSCsubord}. Developing techniques that provide a matching lower bound is the main contribution of this work. In \cite{BSClmrw}, 
$\widetilde{\Phi}_{G,{\log}^\epsilon}$ is bounded below by 
$\exp\left(- n^{1/\epsilon}\right)$ when  $\epsilon>1$ but \cite{BSClmrw}
provides no lower bounds at all  when $0<\epsilon\le 1$. As stated above, the 
present work provides 
sharp lower bounds under any iterated logarithmic weak-moment condition.

\begin{rem}The proof provided below for the lower bounds included in 
the statement of Theorem \ref{th-main} provides a much more precise result, 
namely, it provides some explicit measure $\mu_\rho$ which is a witness 
to the behavior of the infimum $\widetilde{\Phi}_{G,\rho}$ 
for the given $\rho$. We note that no such result is known for 
$\Phi_{G,\rho}$ in general and that even 
the precise behavior of $\Phi_{\mathbb Z,\rho_\alpha}$, $\alpha\in (0,2)$ 
is an open question.   
\end{rem}

To put our results in perspective, we briefly  comment on the classical 
case when $G=\mathbb Z$.  Let $\mu$ be a symmetric probability measure 
on $\mathbb Z$. The approximate local limit theorem of 
Griffin, Jain and Pruitt \cite{GJP} shows that if we set 
$$G(x)=\sum_{y: |y|> x}\mu(y),;\;
K(x)= x^{-2}\sum_{y:|y|\le x}|y|^2\mu(y) \mbox{ and } Q(x)= G(x)+K(x)$$
then, under the assumption that $\limsup_{x\ra \infty}G(x)/K(x)<\infty$,
$$\mu^{(2n)}(0)\simeq a_n^{-1} \mbox{ where } Q(a_n)=1.$$
This of course agrees with Theorem \ref{th-BSC} but fails to cover laws 
relevant to Theorem \ref{th-main} such that 
$$\mu(y)\simeq \frac{1}{(1+|y|)[\log (e+|y|)]^{1+\epsilon}]}$$
because, in such cases, $G$ dominates $K$. However, 
basic Fourier transform arguments show that
$$\widetilde{\Phi}_{\mathbb Z,{\log}^\epsilon}(n)\simeq \exp
\left( -n^{1/(1+\epsilon)}\right)$$
with the measure $\mu$ above being a witness of this behavior.

We close this introduction with a short description of the content of 
other sections. The main problem considered in this paper is the construction 
of explicite measures that satisfy (a) some given moment condition and 
(b) have a prescribed (optimal) behavior in terms of the probability 
of return after $n$ steps of the associate random walk. This is done by 
using subordination techniques based on Bernstein functions.

Section 2 describes how the notion of Bernstein function and the 
associated subordination techniques lead to a variety of 
explicit examples of probability 
measures whose iterated convolutions can be estimated precisely when 
the underlying group has polynomial volume growth. See Theorems 
\ref{th-phipsi1}-\ref{th-phipsi2}.  

Section 3 describes assorted results for measures 
that are supported only on powers of the generators when a given generating 
set has been chosen.

Section 4 develops a set of Pseudo-Poincar\'e inequalities adapted to 
random walks driven by symmetric probability measures that only have 
very low moments. These Pseudo-Poincar\'e inequalities are essential to the 
arguments developed in this paper. 

Section 5 contains the main result of this article, Theorem \ref{th-mainl} 
of which Theorem \ref{th-main} is an immediate corollary.  

The entire paper is written in the natural context of discrete time random 
walks. Well-known general techniques allow to translate the main results 
in the context of continuous time random walks. 
y
\section{The model case provided by subordination}
\setcounter{equation}{0}

Recall that a Bernstein function is a function 
$\psi\in \mathcal C^\infty((0,\infty))$ such that $\psi\ge 0$ and 
$(-1)^k \frac{d^k\psi}{dt^k}\le 0$. 
A classical result asserts that a function $\psi$ is a 
Bernstein function if and only if there are reals $a,b\ge0$ and a measure 
$\nu$ on $(0,\infty)$ satisfying $\int_0^\infty \frac{td\nu(t)}{1+t}<\infty$ such that $\psi(s)= a+bs +\int_0^\infty (1-e^{-st})d\nu(t)$. Set 
\begin{equation}\label{def-c}
c(\psi,1)=b+\int_0^\infty te^{-t}d\nu(t),\;\; c(\psi,n)=\frac{1}{n!}\int_0^\infty t^ne^{-t}d\nu(t), \;n>1.
\end{equation}
If $\psi$ is a Bernstein function satisfying $\psi(0)=0$, $\psi(1)=1$ and $K$ is a Markov kernel then
$$K_\psi= \sum_1^\infty c(\psi,n)K^n$$
is also a Markov kernel.  
Further, one can understand $K_\psi$ as given by $K_\psi=I-\psi(I-K)$. See 
\cite{BSCsubord} for details.
Similarly, if $\phi $ is a probability measure on a group $G$, set
$$\phi_\psi=\sum c(\psi,n)\phi^{(n)}.$$
This is a probability measure which we call the $\psi$-subordinate of $\phi$.

Recall that a complete Bernstein function is a function
$\psi\in \mathcal C^\infty((0,\infty))$ such that 
\begin{equation} \psi(s)=s^2\int_0^\infty e^{-ts}g(t)dt 
\end{equation} 
where $g$ is a Bernstein function (complete Bernstein functions are Bernstein 
function).  A comprehensive book treatment of the theory of Bernstein functions
is \cite{SSV}. See also \cite{Jacob}. 

\begin{exa} The most basic examples of  complete Bernstein functions are 
$\psi(s)=s^\alpha$, $\alpha\in (0,1)$, and $\psi(s)= \log_2(1+ s)$. 
A less trivial example of interest to us is 
$$\psi(s)= \frac{1}{[\log_2 (1+ s^{-1/\alpha})]^{\beta\alpha }},\; 
0<\beta\le 1\le  \alpha<\infty.$$ 
The choice of the base $2$ logarithm in this definition insures that
the additional property $\psi(1)=1$ holds true.  
If we define $\log_{2,k}$ by setting $\log _{2,1}(s)=\log_2(1+s)$
and $\log_{2,k}(s)=\log_{2,1}(\log_{2,k-1}(s))$, $k>1$, then 
the function
$$\psi(s)= \frac{1}{[\log_{2,k} (s^{-1/\alpha})]^{\beta\alpha} },\;
0<\beta\le 1 \le \alpha<\infty$$
is also a complete Bernstein function. 
\end{exa}

The following two results from \cite{BSCsubord} will be very
 useful for our purpose. For a comprehensive treatment of the theory of 
function of slow and regular variation, see \cite{BGT}.

\begin{theo}[{\cite[Theorems 2.5--2.6]{BSCsubord}}] \label{th-cpsi}
Assume that $\psi_1:(0,\infty)\ra (0,\infty)$ has a positive 
continuous derivative and satisfies $\psi_1(0^+)=0$. Assume further that 
$x\mapsto \psi_1(x)$
and $x\mapsto x\psi_1'(x)$ are slowly varying at $0^+$ and that
$$\psi'_1(s)\sim \frac{1}{s\ell(1/s)}$$ where $\ell$ 
is slowly varying at infinity. Then there exists
a positive constant $a$ and a complete Bernstein function $\psi$ such that $\psi\sim a \psi_1$, $\psi'\sim a\psi_1'$ at $0^+$ and $\psi(1)=1$. Further
$$c(\psi,n)\simeq \frac{1}{(1+n)\ell(1+ n)}.$$
\end{theo}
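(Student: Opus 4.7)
The plan is to build $\psi$ by prescribing its Lévy measure through a Laplace representation, ensuring complete monotonicity of the density (hence the complete Bernstein property), and then to extract both the asymptotic equivalence $\psi\sim a\psi_1$ and the sequence estimate for $c(\psi,n)$ from one Fubini identity together with the Abelian and Tauberian theorems of Karamata. First, the hypotheses force $\int^\infty dt/(t\ell(t))<\infty$, since otherwise $\psi_1(s)=\int_0^s\psi_1'(u)\,du$ would already diverge (via the substitution $v=1/u$). Choose a positive finite measure $\gamma$ on $[0,\infty)$ with $\gamma([0,R])\sim R/\ell(1/R)$ as $R\to 0^+$, and put
$$h(t):=\int_0^\infty e^{-rt}\,d\gamma(r),\qquad \nu(dt):=h(t)\,dt.$$
Being a Laplace transform, $h$ is completely monotone, so $\nu$ is admissible as the Lévy measure of a complete Bernstein function. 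Setting, by Fubini,
$$\psi_0(s):=\int_0^\infty(1-e^{-st})h(t)\,dt=\int_0^\infty\frac{s}{r(s+r)}\,d\gamma(r),$$
produces the candidate complete Bernstein function with $\psi_0(0^+)=0$.

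The asymptotics are matched using two applications of Karamata. The Abelian direction applied to $\gamma$ near $0^+$ yields $h(t)\sim 1/(t\ell(t))$ as $t\to\infty$, whence $M(T):=\int_0^T t h(t)\,dt\sim T/\ell(T)$ by Karamata's integration lemma. The Tauberian direction (\cite[Theorem 1.7.1]{BGT}) then gives
$$\psi_0'(s)=\int_0^\infty t e^{-st}h(t)\,dt\sim\frac{1}{s\ell(1/s)}\qquad (s\to 0^+),$$
matching the prescribed asymptotic of $\psi_1'$ up to a positive constant $c_0$. Integrating in $s$ yields $\psi_0\sim c_0\psi_1$ at $0^+$; then $\psi:=\psi_0/\psi_0(1)$ and $a:=c_0/\psi_0(1)$ deliver a complete Bernstein function with $\psi(1)=1$, $\psi\sim a\psi_1$, and $\psi'\sim a\psi_1'$, as required.

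For the coefficient estimate, Fubini applied to the definition \eqref{def-c} gives the clean identity
$$c(\psi,n)=\frac{1}{n!}\int_0^\infty t^n e^{-t}h(t)\,dt=\int_0^\infty\frac{d\tilde\gamma(r)}{(1+r)^{n+1}},$$
where $\tilde\gamma$ is the rescaling of $\gamma$ produced by the normalization $\psi(1)=1$. The substitution $r=u/(n+1)$ shows the integrand is concentrated on $u$ of order $1$; there $(1+u/(n+1))^{-(n+1)}$ is comparable to $e^{-u}$, and by construction the rescaled measure satisfies $d\tilde\gamma(u/(n+1))\simeq du/\big((n+1)\ell(n+1)\big)$. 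Integrating against $e^{-u}$ and invoking slow variation of $\ell$ yields $c(\psi,n)\simeq 1/((1+n)\ell(1+n))$.

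The main obstacle is the pair of Karamata steps: the Abelian passage from the prescribed tail of $\gamma$ at $0$ to a pointwise-sharp asymptotic for $h(t)$ at infinity, and the Tauberian passage from $M(T)\sim T/\ell(T)$ to a sharp asymptotic for $\psi_0'$. Both rely on mild monotonicity or regularity of the objects involved, which is exactly what the hypotheses (smoothness of $\psi_1$, slow variation of both $\psi_1$ and $x\psi_1'(x)$ at $0^+$) were designed to provide.
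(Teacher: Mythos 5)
This theorem is quoted in the paper directly from \cite[Theorems 2.5--2.6]{BSCsubord} and no proof is reproduced here, so there is no in-paper argument to compare against. Taken on its own, your construction is the natural one and is essentially sound: realizing the L\'evy density $h$ as the Laplace transform of a measure $\gamma$ with $\gamma([0,R])\sim R/\ell(1/R)$ at $0^+$ automatically makes $\nu(dt)=h(t)\,dt$ completely monotone, hence $\psi_0$ a complete Bernstein function, and the Karamata/regular-variation machinery then transfers the behavior of $\gamma$ at $0^+$ to $h$ at infinity and to $\psi_0'$ at $0^+$; the coefficient identity $c(\psi_0,n)=\int_0^\infty (1+r)^{-(n+1)}\,d\gamma(r)$ and the comparison with $h(n)$ give the required $c(\psi,n)\simeq 1/((1+n)\ell(1+n))$.

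Two points should be tightened. First, in the last step you assert $d\tilde\gamma(u/(n+1))\simeq du/((n+1)\ell(n+1))$, which presupposes that $\gamma$ has a genuine density near $0$; this is not forced by the tail condition $\gamma([0,R])\sim R/\ell(1/R)$, so you should build it into the construction, e.g.\ take $d\gamma(r)=dr/\ell^*(1/r)$ on $(0,r_0]$ with $\ell^*$ a continuous, monotone equivalent of $\ell$, and anything finite on $(r_0,\infty)$. A cleaner alternative that avoids this issue entirely is to sandwich $(1+r)^{-(n+1)}$ between $e^{-(n+1)r}$ and $e^{-(n+1)r/2}$ on $[0,1]$, so that $c(\psi_0,n)$ is pinched between $h(n+1)$ and $h((n+1)/2)+O(2^{-n})$, and then invoke the already-proved $h(t)\sim 1/(t\ell(t))$ with slow variation of $\ell$. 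Second, the Abelian passage from $\gamma([0,R])\sim R/\ell(1/R)$ to $h(t)\sim 1/(t\ell(t))$ requires a little care: after writing $h(t)=\int_0^\infty e^{-u}G(u/t)\,du$ with $G(R)=\gamma([0,R])$, the tail $\int_M^\infty e^{-u}G(u/t)\,du$ does \emph{not} vanish relative to $G(1/t)$ as $t\to\infty$ for fixed $M$; one needs Potter bounds to control $G(u/t)/G(1/t)$ up to $u\le tR_0$ and then the trivial bound $G\le\|\gamma\|$ beyond, which produces a term $O(\|\gamma\| e^{-tR_0})=o(G(1/t))$. These are standard refinements and the overall architecture of the argument is correct.
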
 
\begin{theo}[{\cite[Theorems 3.3--3.4]{BSCsubord}}]
Let $G$ be a finitely generated group of polynomial volume growth. Let 
$\phi$ be a finitely supported symmetric probability measure with $\phi(e)>0$ 
and generating support.  Let $\psi$ be a Bernstein function with $\psi(0)=0$, $\psi(1)=1$. Assume that 
$$\psi(s)\simeq 1/\theta(1/s)$$
with $\theta$ positive increasing slowly varying at infinity. \begin{enumerate}
\item Assume that
the rapidly varying function $\theta^{-1}$ (the inverse function of $\theta$)
satisfies $$\log \theta^{-1} (u)\simeq u^{\gamma} \kappa (u)^{1+\gamma}$$ 
with $\gamma\in (0,\infty)$ and $\kappa$ slowly varying at infinity. 
Then the $\psi$-subordinate $\phi_\psi$ of $\phi$ satisfies
$$-\log( \phi_\psi^{(n)}(e))\simeq n^{\gamma/(1+\gamma)}/
\kappa^{\#}(n^{1/(1+\gamma)})$$
where $\kappa^{\#}$ is the de Bruijn conjugate of $\kappa$.
\item Assume that the function  $\kappa = \theta\circ \exp$ is slowly varying and  satisfies  $s\kappa^{-1}(s)\simeq \kappa^{-1}(s)$ at infinity.
Then the $\psi$-subordinate $\phi_\psi$ of $\phi$ satisfies
$$-\log( \phi_\psi^{(n)}(e))\simeq  n/
\kappa(n).$$
\end{enumerate}
\end{theo}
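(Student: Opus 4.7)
The plan is to analyze $\phi_\psi^{(n)}(e)$ by combining the subordination identity $K_\psi = I-\psi(I-K)$ (with $K$ the convolution operator associated to $\phi$) with the classical heat-kernel estimate $\phi^{(2n)}(e)\simeq n^{-D/2}$, available under the given hypotheses because $G$ has polynomial volume growth of degree $D$. By the spectral theorem on $\ell^2(G)$,
\[
\phi_\psi^{(2n)}(e) = \langle K_\psi^{2n}\delta_e,\delta_e\rangle
= \int_0^2 \bigl(1-\psi(\mu)\bigr)^{2n}\, dE(\mu),
\]
where $E$ is the spectral measure of $I-K$ paired with $\delta_e$. A Hardy--Littlewood--Karamata Tauberian argument converts the two-sided bound $\phi^{(2n)}(e)\simeq n^{-D/2}$ into $E([0,\mu))\simeq \mu^{D/2}$ as $\mu\to 0^+$, equivalently $dE(\mu)\simeq \mu^{D/2-1}d\mu$ near~$0$.

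Using the elementary bracketing $e^{-2n\psi(\mu)}\le (1-\psi(\mu))^{2n}\le e^{-n\psi(\mu)}$ valid whenever $\psi(\mu)\le 1/2$, and noting that the contribution of $\mu\ge\mu_0$ to the integral is exponentially smaller than the target, the problem reduces to
\[
\phi_\psi^{(2n)}(e)\ \simeq\ \int_0^{\mu_0} e^{-cn\psi(\mu)}\,\mu^{D/2-1}\,d\mu.
\]
Changing variables $u=1/\mu$ and inserting $\psi(1/u)\simeq 1/\theta(u)$ converts this into an integral of the form $\int_{u_0}^\infty e^{-cn/\theta(u)}\,u^{-D/2-1}\,du$, which I then analyse by Laplace's method. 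The saddle point $u^\ast$ is located via the matching condition $n/\theta(u^\ast)\simeq (D/2+1)\log u^\ast$; Laplace's principle says the integral has order $\exp(-(\text{exponent at }u^\ast))$ up to a polynomial correction that becomes negligible after taking $-\log$.

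In Case~(2), writing $x=\log u$ recasts the integrand as $e^{-cn/\kappa(x)-(D/2)x}$, whose exponent is minimized at $x^\ast$ with $n/\kappa(x^\ast)\simeq x^\ast$. The slow variation of $\kappa$ together with the stated regularity assumption on $\kappa^{-1}$ yields $\kappa(n/\kappa(n))\simeq \kappa(n)$, so $x^\ast\simeq n/\kappa(n)$ and the minimum value of the exponent is itself of order $n/\kappa(n)$. In Case~(1), the relation $\log u\simeq \theta(u)^\gamma\kappa(\theta(u))^{1+\gamma}$ (obtained by inverting $\log\theta^{-1}(\cdot)\simeq (\cdot)^\gamma\kappa(\cdot)^{1+\gamma}$) turns the saddle equation into $n^{1/(1+\gamma)}\simeq \theta(u^\ast)\,\kappa(\theta(u^\ast))$. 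De Bruijn inversion of the map $v\mapsto v\kappa(v)$ then yields $\theta(u^\ast)\simeq n^{1/(1+\gamma)}\,\kappa^\#(n^{1/(1+\gamma)})$, whence $n/\theta(u^\ast)\simeq n^{\gamma/(1+\gamma)}/\kappa^\#(n^{1/(1+\gamma)})$, matching the claimed exponent.

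The principal obstacle is carrying out the saddle-point analysis when only slow variation is assumed for $\theta$: since one has no pointwise control on $\theta'$, the critical point cannot be located by solving $f'(u^\ast)=0$ directly. Instead, I would identify $u^\ast$ via the matching condition of the two monotone competing terms and establish matching upper and lower Laplace estimates using monotonicity together with the representation theorem for slowly varying functions and Karamata's integration lemma from \cite{BGT}. A second, more delicate step in Case~(1) is the passage from the implicit saddle equation to a closed-form expression involving the de Bruijn conjugate $\kappa^\#$, which requires checking that all slowly-varying corrections are absorbed compatibly on both sides of the final $\simeq$.
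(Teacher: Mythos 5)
The statement you are proving is imported verbatim from \cite[Theorems 3.3--3.4]{BSCsubord}; the present paper contains no proof of it and relies on the reference, so there is no internal argument to compare against. Your spectral-decomposition strategy --- writing $\phi_\psi^{(n)}(e)=\int_0^2(1-\psi(\lambda))^n\,dE(\lambda)$ via $K_\psi=I-\psi(I-K)$, reducing to $\int e^{-cn\psi(\mu)}\,dE(\mu)$, and then running a Laplace/Karamata analysis after substituting $u=1/\mu$ and $\psi(1/u)\simeq 1/\theta(u)$ --- is a natural and probably successful route to the result, and the ``balance condition'' computations you carry out in Cases (1) and (2) do reproduce the stated exponents (I checked them against the model cases $\theta(u)=(\log u)^\beta$ and $\theta(u)=(\log\log u)^\beta$).

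There is, however, one genuine gap. You pass from the Tauberian estimate $E([0,\mu))\simeq \mu^{D/2}$ to ``equivalently $dE(\mu)\simeq\mu^{D/2-1}\,d\mu$.'' These are not equivalent: the diagonal spectral measure $E=E_{\delta_e,\delta_e}$ of $I-K$ on a general group of polynomial growth has no reason to be absolutely continuous, let alone to admit a two-sided density bound, and a two-sided bound on the distribution function does not control the density even when one exists. The correct way to proceed is to keep only $E([0,\mu))\simeq\mu^{D/2}$, which does hold (upper bound by Chebyshev with $n\approx 1/\mu$; lower bound via the Nash/anti--Faber--Krahn inequalities valid on polynomial-growth groups, not merely from the on-diagonal bound $\phi^{(2n)}(e)\gtrsim n^{-D/2}$), and then integrate by parts against the decreasing function $\mu\mapsto e^{-cn\psi(\mu)}$ before launching the Laplace analysis. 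Two further cosmetic points: your bracketing should read $e^{-4n\psi(\mu)}\le(1-\psi(\mu))^{2n}\le e^{-2n\psi(\mu)}$ on $\{\psi\le 1/2\}$ (you have $1-x\ge e^{-x}$ the wrong way round), which is immaterial for $\simeq$; and the Laplace step is, as you acknowledge, only a plan --- to make it a proof one must replace ``saddle point'' by the monotone-balance argument together with Karamata's representation and integration theorems, as you sketch. With the density claim removed and replaced by integration by parts, the proposal is on solid ground.
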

\begin{exa} Fix an integer $k\ge 1$ and parameters $\alpha,\beta$, 
$0<\beta\le 1 \le \alpha<\infty$. Consider the complete Bernstein function  
$$\psi(s)= \frac{1}{\theta(1/s)}=\frac{1}{[\log_{2,k} (s^{-1/\alpha})]
^{\beta\alpha} }.$$
A simple computation yields
$$\psi'(s)=   \frac{\beta}{s(1+s^{1/\alpha})\log_{2,1}(s^{-1/\alpha})\cdots 
\log_{2,k-1}(s^{-1/\alpha}) [\log_{2,k} (s^{-1/\alpha})]^{1+\beta\alpha}}.$$
It follows that
$$c(\psi,n)\simeq \frac{1}{(1+n) (1+\log_{[1]}n)
\cdots (1+\log_{[k-1]}n)(1+\log_{[k]}n)^{1+\beta\alpha}}.$$
Here $\log_{[m]}n=\log(1+ \log_{[m-1]}n)$ with $\log_{[1]}n=\log (1+n)$.
Further if $k=1$, we have  
$\log \theta ^{-1} (u) \simeq u ^{1/\alpha \beta} $
and it follows that $$\phi_\psi^{(n)}(e)\simeq \exp\left(-n^{1/(1+\alpha\beta)}
\right).$$
In the case where $k>1$, we have
$\kappa(s)=\theta\circ \exp (s)  \simeq [\log_{[k-1]}(s)]^{\alpha\beta}$
and we obtain 
$$\phi_\psi^{(n)}(e)\simeq 
\exp\left(-n/ [\log_{[k-1]}(n)]^{\alpha\beta}\right).$$
\end{exa}

For later purpose, we need the information contained in the following Theorem
which is an easy corollary of Theorem \ref{th-cpsi} 
and the Gaussian bounds of \cite{HSC}.
\begin{theo} \label{th-phipsi1}
Let $\psi_1$ and $\psi$ be as in {\em Theorem \ref{th-cpsi}}
with $$\psi'_1(s)\sim \frac{1}{s\ell(1/s)}\;\mbox{ at } 0^+,$$ where $\ell$ 
is slowly varying at infinity.  
Let $G$ be a finitely generated group of polynomial volume growth of degree $D$. 
Let 
$\phi$ be a finitely supported symmetric probability measure with $\phi(e)>0$ 
and generating support. 
Then there are constants $c,C\in (0,\infty)$ such that the probability measure 
$\phi_\psi$ satisfies
$$\forall\,x\in G,\;\; \frac{c}{(1+|x|)^{D}\ell(1+|x|^2)}\le
\phi_\psi(x) \le \frac{C}{(1+|x|)^{D}\ell(1+|x|^2)}.$$
\end{theo}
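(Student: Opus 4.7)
The plan is to expand
$$\phi_\psi(x)=\sum_{n\ge 1} c(\psi,n)\phi^{(n)}(x)$$
and combine the sharp estimate $c(\psi,n)\simeq 1/((1+n)\ell(1+n))$ provided by Theorem~\ref{th-cpsi} with the classical two-sided Gaussian bounds of Hebisch--Saloff-Coste \cite{HSC} for convolution powers of $\phi$ on a group of polynomial volume growth of degree $D$. Those bounds produce positive constants $c_1,C_1,c_2,C_2$ such that
$$c_1\,n^{-D/2}\,e^{-C_1|x|^2/n}\;\le\;\phi^{(n)}(x)\;\le\; C_2\,n^{-D/2}\,e^{-c_2|x|^2/n},$$
the lower bound being valid whenever $|x|$ is bounded by a fixed multiple of $n$ (outside this range $\phi^{(n)}(x)$ vanishes since $\phi$ is finitely supported). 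The whole problem is thereby reduced to obtaining two-sided estimates of order $|x|^{-D}/\ell(1+|x|^2)$ for the discrete Laplace-type sum
$$\sum_{n\ge 1}\frac{n^{-D/2}\,e^{-c|x|^2/n}}{(1+n)\,\ell(1+n)},$$
with a generic constant $c>0$.

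For the upper bound I would split the range at $N=|x|^2$. On $n\le N$ the function $n\mapsto n^{-1-D/2}e^{-c_2N/n}/\ell(n)$ attains its maximum at $n\sim N$ with value $\simeq N^{-1-D/2}/\ell(N)$, and a standard discrete Laplace argument (or the substitution $u=N/n$) shows that the total contribution is $\lesssim N^{-D/2}/\ell(N)=|x|^{-D}/\ell(|x|^2)$, with the deep tail $n\ll N$ being super-exponentially small. On $n\ge N$ the exponential is bounded, and the task reduces to estimating $\sum_{n\ge N}n^{-1-D/2}/\ell(n)$; since $\ell$ is slowly varying and we may assume $D\ge 1$ (otherwise $G$ is finite and the statement is vacuous), Karamata's theorem \cite{BGT} yields precisely $\simeq N^{-D/2}/\ell(N)$. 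For the lower bound I would retain only the window $n\in[|x|^2,2|x|^2]$, on which $c(\psi,n)\simeq |x|^{-2}/\ell(|x|^2)$, $n^{-D/2}\simeq |x|^{-D}$, and the Gaussian lower bound yields $e^{-C_1|x|^2/n}\ge e^{-C_1}$ (the restriction $|x|\le \sqrt{n}$ required by the lower Gaussian estimate is automatic here); summing the $\simeq|x|^2$ terms produces a lower bound of the required order.

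The regime $|x|\le R$ for a fixed $R$ must be handled separately but is essentially trivial: the lower bound comes from $\phi_\psi(x)\ge c(\psi,n_0)\phi^{(n_0)}(x)>0$ for a suitable fixed $n_0$ (chosen so that $\phi^{(n_0)}$ charges $x$), while the upper bound is immediate since $\ell(1+|x|^2)$ is then bounded above and below. The main technical point, rather than any genuinely new analytic input, is a careful bookkeeping of the slowly varying function $\ell$ across the two ranges of the split: one must justify $\ell(1+n)\simeq\ell(1+|x|^2)$ uniformly when $n\simeq|x|^2$ and correctly invoke the Karamata tail estimate for $\sum_{n\ge N}n^{-1-D/2}/\ell(n)$. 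Both are classical facts from the theory of regular variation \cite{BGT}, so the argument is essentially a careful assembly of Theorem~\ref{th-cpsi}, the Gaussian bounds of \cite{HSC}, and Karamata's theorem.
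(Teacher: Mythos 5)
Your proposal is correct and follows essentially the same route as the paper: expand $\phi_\psi$ in convolution powers of $\phi$, invoke $c(\psi,n)\simeq 1/((1+n)\ell(1+n))$ from Theorem~\ref{th-cpsi} together with the Hebisch--Saloff-Coste Gaussian bounds, split the sum at $n=|x|^2$, get the lower bound from the large-$n$ range, and control the small-$n$ range by a Laplace-type estimate (the paper organizes the latter via a Potter-type power bound on the ratio $(1+|x|)^{2+D}\ell(1+|x|^2)/[(1+n)^{1+D/2}\ell(1+n)]$ and a bucketing of $n$ by the value of $|x|^2/n$, which is the same idea as your substitution $u=N/n$). One minor imprecision: the constraint for the validity of the Gaussian lower bound for a finitely supported walk is of the form $|x|\lesssim n$ rather than $|x|\le\sqrt{n}$, but since in your window $n\ge|x|^2$ the stronger inequality $|x|\le\sqrt{n}$ holds anyway, nothing is affected.
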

\begin{proof} By \cite{HSC},
there are constants $c_i$, $1\le i\le 4$,
such that for each $x, n$ such that $\phi^{(2n)}(x)\neq 0$,
$$
c_1 n^{-D/2} \exp\left(- c_2\frac{|x|^2}{n}\right) \le 
\phi^{(n)}(x)\le c_3 n^{-D/2} \exp\left(- c_4\frac{|x|^2}{n}\right).$$ 
By Definition and Theorem \ref{th-cpsi},  $\phi_\psi^{(n)}(x)$ is bounded 
above and below 
$$ \sum_1^\infty  \frac{c}{(1+n)\ell(1+n)} \phi^{(n)}(x)$$
(with different constants $c$ in the upper and lower bound). Break this sum 
into two parts $S_1,S_2$ with $S_1$ being the sum over $n\ge |x|^2$. We have
$$S_1 \simeq \sum_{n\ge |x|^2} \frac{1}{(1+n)^{1+D/2}\ell(1+n)} \simeq  
\frac{1}{(1+|x|^2)^{D/2}\ell(1+|x|^2)}.$$
which already proves the desired lower bound.
Similarly, for $S_2$,  note that
$$\frac{(1+|x|)^{2+D}\ell(1+|x|^2)}{(1+n)^{1+D/2}\ell(1+n)}\le C 
\left(\frac{1+|x|^2}{1+n}\right)^A$$
for some $A>0$. Further, for each $k$,  there are at most   $2|x|^2/k^2$ positive integers $n$ 
such that
$k-1< |x|^2/n\le k$. Hence, we obtain
\begin{eqnarray*}
S_2 &\le & \frac{C'}{(1+|x|)^{2+D}\ell(1+|x|^2)}\sum _k (1+|x|^2)(1+k)^{A-2}e^{-c_4 (k-1)}\\
&\le & \frac{C''}{(1+|x|)^D\ell(1+|x|^2)}.
\end{eqnarray*}
Together with the estimate already obtained for $S_1$, this gives 
the desired upper bound on 
$\phi_\psi^{(n)}(x)$.
\end{proof}

The following statement put together the results gathered above while 
emphasizing the point of view of the construction of a model with a 
prescribed behavior.

\begin{theo} \label{th-phipsi2}
Let $G$ be a finitely generated group with polynomial 
volume growth of degree $D$. Let $\phi$ be a finitely supported symmetric
probability measure with $\phi(e)>0$ and generating support.
Let $\ell$ be a continuous positive slowly  
varying function at infinity such that $\int_0^1 \frac{ds}{s\ell(1/s)}<\infty$.
Then there exists a complete Bernstein function $\psi$ with $\psi(0)=0$, 
$\psi(1)=1$ such that:
\begin{itemize}
\item  $\psi(s)\sim a\int_0^{s}\frac{dt}{t\ell(1/t)}$ for some 
constant $a>0$;
\item $c(\psi,n)\simeq \frac{1}{(1+n) \ell (1+n)}$;
\item $\phi_\psi(x)\simeq [(1+|x|)^D\ell(1+|x|^2)]^{-1}$;  
\end{itemize}
Further, if we set $\theta(s)= 1/\int_0^{1/s} \frac{dt}{t\ell(1/t)}$, the following holds:
\begin{itemize} 
\item If $\log \theta^{-1} (u) \simeq u^\gamma \kappa(u)^{1+\gamma}$
with $\gamma\in (0,\infty)$ and $\kappa$ slowly varying at infinity, then we have
$$\phi_\psi^{(n)}(e)\simeq \exp\left(
- n^{\gamma/(1+\gamma)}/\kappa^{\#}(n^{1/(1+\gamma)})\right)$$
where $\kappa^{\#}$ is the de Bruijn conjugate of $\kappa$.
\item If $\kappa=\theta \circ \exp$ is slowly varying and satisfies 
$s\kappa^{-1}(s)\simeq \kappa^{-1}(s)$ then
$$\phi_\psi^{(n)}(e)\simeq \exp\left(- n/\kappa(n)\right).$$ 
\end{itemize}
\end{theo}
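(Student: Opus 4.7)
The plan is to build the function $\psi_1(s)=\int_0^s \frac{dt}{t\ell(1/t)}$, which is well defined by the integrability hypothesis and satisfies $\psi_1(0^+)=0$, and to feed it into Theorem \ref{th-cpsi}. Its derivative is $\psi_1'(s)=1/(s\ell(1/s))$, so $s\psi_1'(s)=1/\ell(1/s)$ is slowly varying at $0^+$ because $\ell$ is slowly varying at infinity. The remaining hypothesis of Theorem \ref{th-cpsi}, namely that $\psi_1$ itself is slowly varying at $0^+$, is the only analytic point of the proof: after the change of variables $u=1/t$ it amounts to slow variation at infinity of the tail integral $F(x):=\int_x^\infty \frac{du}{u\ell(u)}$. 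For each fixed $\lambda>1$ one has $\int_x^{\lambda x}\frac{du}{u\ell(u)}\sim (\log\lambda)/\ell(x)$ by uniform slow variation of $\ell$ on $[1,\lambda]$, hence the bound $F(x)\ge \int_x^{\lambda x}\frac{du}{u\ell(u)}$ together with $\lambda\to\infty$ forces $\ell(x)F(x)\to\infty$, which in turn gives $F(\lambda x)/F(x)\to 1$ for every fixed $\lambda$; alternatively one invokes directly a standard tail result from \cite{BGT}.

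With $\psi_1$ verified, Theorem \ref{th-cpsi} supplies a complete Bernstein function $\psi$ with $\psi(0)=0$, $\psi(1)=1$, $\psi\sim a\psi_1$ at $0^+$ for some $a>0$, and $c(\psi,n)\simeq 1/[(1+n)\ell(1+n)]$; this yields the first two bulleted conclusions. Since the hypotheses established for $\psi_1$ are exactly those required by Theorem \ref{th-phipsi1}, applying that theorem to the $\psi$-subordinate of $\phi$ (using the finitely supported symmetric $\phi$ with $\phi(e)>0$ and generating support provided in the statement) produces the two-sided pointwise bound $\phi_\psi(x)\simeq [(1+|x|)^D\ell(1+|x|^2)]^{-1}$, which is the third bullet.

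For the ``further'' assertions I would rewrite $\theta(s)=1/\psi_1(1/s)$, so that $\psi(s)\sim a/\theta(1/s)$, i.e.\ $\psi(s)\simeq 1/\theta(1/s)$; the monotonicity of $\psi_1$ makes $\theta$ increasing, and slow variation of $\psi_1$ at $0^+$ translates into slow variation of $\theta$ at infinity. This is precisely the set-up of the subordination theorem quoted above from \cite[Theorems 3.3--3.4]{BSCsubord}, whose two cases --- the one driven by $\log\theta^{-1}(u)\simeq u^\gamma\kappa(u)^{1+\gamma}$ and the one in which $\kappa=\theta\circ\exp$ is slowly varying with $s\kappa^{-1}(s)\simeq\kappa^{-1}(s)$ --- translate verbatim into the two displayed asymptotic expressions for $\phi_\psi^{(n)}(e)$. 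I expect the slow-variation check for $\psi_1$ at $0^+$ to be the only step requiring any real work; the rest of the argument is a matter of matching hypotheses with the three earlier theorems of this section and stitching their conclusions together.
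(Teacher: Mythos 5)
Your proposal is correct and matches the route the paper intends: the theorem is presented without proof as a synthesis of Theorem \ref{th-cpsi}, Theorem \ref{th-phipsi1}, and the subordination asymptotics quoted from \cite[Theorems 3.3--3.4]{BSCsubord}, and you apply each of those in exactly the way the statement is set up to invoke them. The only detail the paper leaves implicit that you usefully spell out is the verification that $\psi_1(s)=\int_0^s dt/(t\ell(1/t))$ is slowly varying at $0^+$, which is the standard Karamata tail estimate from \cite{BGT} and is carried out correctly.
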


\begin{exa} Let $G$ be a finitely generated group with polynomial volume growth 
of degree $D$. 
Then, for any $\delta>0$, 
there exists a symmetric probability measure $\phi_\delta$
such that 
$$\phi_\delta(x)\simeq \frac{1}{(1+|x|)^D[\log (e+|x|)]^{1+\delta}}$$
and 
$$\phi_\delta^{(n)}(e)\simeq \exp\left(- n^{1/(1+\delta)}\right).$$
Also, for any $\delta>0$ and integer $k\ge 1$, there exists a symmetric probability measure $\phi_{k,\delta}$ such that
$$\phi_{k,\delta}(x)\simeq \frac{1}{(1+|x|)(1+\log_{[1]} |x|)\cdots(1+\log_{[k-1]}|x|)(1+\log_{[k]}|x|)^{1+\delta}}$$
and
$$\phi_{k,\delta}^{(n)}(e)\simeq \exp \left(-n/(\log_{[k-1]} n)^{1/\delta}\right)
.$$
Further, for any $k\ge 1$ 
and $\delta>0$,   the comparison results of \cite{PSCstab} imply that
any symmetric probability measure $\varphi$ with the property that
$$\forall\, f\in L^2(G),\;\;\mathcal E_\varphi(f,f)\le \mathcal E_{\phi_{k,\delta}}(f,f)
\le C\mathcal E_\varphi(f,f)$$ satisfies $\varphi^{(2n)}(e)\simeq \phi_{k,\delta}^{(2n)}(e)$.  See  (\ref{def-Emu}) for a definition of the Dirichlet form 
$\mathcal E_\mu$ associated with a symmetric probability measure $\mu$.
\end{exa}

\section{Measures supported on powers of generators}
\setcounter{equation}{0}

In \cite{SCZ-nil}, the authors introduced the study 
of random walks driven by measures supported on the powers of given generators.
Namely, given a group $G$ equipped with a  generating $k$-tuple 
$(s_1,\dots,s_k)$, fix a $k$-tuple of probability measures $(\mu_i)_1^k$, each
$\mu_i$ being a probability measure on $\mathbb Z$, and set
\begin{equation}\label{def-mu}
\mu(g)=k^{-1}\sum_1^k\sum_{n\in \mathbb Z}\mu_i(n)\mathbf 1_{s_i^n}(g).
\end{equation}
In \cite{SCZ-nil}, special attention is given to the case when the 
$\mu_i$ are symmetric power laws. Here, we focus on the case when the $\mu_i$ are symmetric, all equal and are of the type
$$\mu_i(n)=\phi(n)  \simeq \frac{1}{(1+n)\ell(1+n)}$$
where $\ell$ is increasing and slowly varying. 
Obviously, we require here that $\sum_1^\infty [n\ell(n)]^{-1}<\infty$.

The following statement is a special case of \cite[Theorem 5.7]{SCZ-nil}. 
It provides a key comparison between the Dirichlet forms of 
measures supported on power of generators and associated measures that
are radial with respect to the word-length. In this form, this 
result holds  only under the hypothesis that the group $G$ is nilpotent. 
Recall that 
the Dirichlet form $\mathcal E_\mu$ associated with a symmetric probability measure $\mu$ is the quadratic form on $L^2(G)$ given by
\begin{equation}\label{def-Emu}
\mathcal E_\mu (f,f)=\frac{1}{2}\sum_{x,y}|f(xy)-f(x)|^2\mu(y).
\end{equation}

\begin{theo} \label{th-comp}
Let $G$ be a nilpotent group equipped with a generating $k$-tuple 
$S=(s_1,\dots,s_k)$. Let $|\cdot|$ be the corresponding word-length 
and $V$ be the associated volume growth function. Fix a continuous increasing
function $\ell: [0,\infty)\ra (0,\infty)$ which is slowly varying at infinity.
Assume that 
\begin{equation}
\sum_{g\in G}\frac{1}{V(1+|g|)\ell(1+|g|)}<\infty.
\end{equation}
Consider the probability measures $\nu$ and $\mu$ defined by
\begin{equation}
\nu(g)= \frac{c}{V(1+|g|)\ell(1+|g|)}, \;\;c^{-1}= \sum\frac{1}{V(1+|g|)\ell(1+|g|)},\end{equation}
and 
\begin{equation}
\mu(g)= k^{-1}\sum_1^k\sum_{n\in \mathbb Z} \frac{b \mathbf 1 _{s_i^n}(g)}{(1+n)\ell(1+n)}
,\;\;b^{-1}=\sum_{\mathbb Z}\frac{1}{(1+n)\ell(1+n)}.
\end{equation} 
Then there exists a constant $C$ such that
$$\mathcal E_\mu\le C\mathcal E_\nu.$$
\end{theo}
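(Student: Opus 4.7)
The plan is to compare the Dirichlet forms by dominating, for each generator $s_i$ and integer $n\ne 0$, the contribution of $|f(xs_i^n)-f(x)|^2$ to $\mathcal{E}_\mu$ by an averaged sum of $\nu$-contributions arising from a two-step path $x\to xa\to xs_i^n$, where the intermediate point $a$ is drawn uniformly from a suitably chosen finite set $A_{i,n}\subset G$. Because $\nu$ spreads its mass over the whole group with a weight designed to match the aggregate $\mu$-mass at every scale, such an averaging should absorb the excessive concentration of $\mu$ along the one-parameter axes $\{s_i^n\}_{n\in\mathbb{Z}}$.

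Concretely, I start from the trivial bound $|f(xs_i^n)-f(x)|^2\le 2|f(xa)-f(x)|^2+2|f(xs_i^n)-f(xa)|^2$, valid for any $a\in G$. Averaging over $a$ uniform in $A_{i,n}$, summing over $x\in G$, and using left-invariance of counting measure (via the change of variable $y=xa$) to rewrite $\sum_x|f(xs_i^n)-f(xa)|^2=\sum_y|f(y\cdot a^{-1}s_i^n)-f(y)|^2$ yields
$$\sum_x|f(xs_i^n)-f(x)|^2\le \frac{2}{|A_{i,n}|}\sum_{a\in A_{i,n}}\Big[\sum_x|f(xa)-f(x)|^2+\sum_x|f(x\cdot a^{-1}s_i^n)-f(x)|^2\Big].$$
I would then take $A_{i,n}=\{g\in G:|g|\le 2|s_i^n|\}$, so that $|A_{i,n}|\simeq V(|s_i^n|)$ by polynomial growth and both $a$ and $a^{-1}s_i^n$ lie in a ball of radius $O(|s_i^n|)$ whenever $a\in A_{i,n}$. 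Multiplying through by the $\mu$-weight $\phi(n)/k\simeq 1/(k(1+|n|)\ell(1+|n|))$, summing over $i$ and $n$, and interchanging the order of summation, the resulting bound on $\mathcal{E}_\mu(f,f)$ takes the form $\sum_g\tilde c(g)\sum_x|f(xg)-f(x)|^2$ with
$$\tilde c(g)\lesssim\sum_i\sum_{n:\,|s_i^n|\gtrsim|g|}\frac{1}{(1+|n|)\ell(1+|n|)\,V(|s_i^n|)}.$$

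To conclude, it remains to verify $\tilde c(g)\le C/(V(|g|)\ell(|g|))\simeq C\nu(g)$, and this is where the nilpotent hypothesis plays its role. In a nilpotent group, each generator $s_i$ satisfies $|s_i^n|\simeq n^{1/c_i}$ for an integer $c_i\ge 1$ determined by the depth of $s_i$ in the lower central series. Combined with $V(r)\simeq r^D$ and the tame slow variation property $\ell(r^c)\simeq\ell(r)$, the constraint $|s_i^n|\gtrsim|g|$ translates to $n\gtrsim|g|^{c_i}$, and the remaining sum evaluates scale by scale to $\simeq 1/(|g|^D\ell(|g|))$, as desired. The principal obstacle I anticipate is the uniform control across all depths $c_i$---in particular for the deepest generators, where $|s_i^n|$ grows very slowly in $n$ and the available scales are sparse---and it is precisely here that the combinatorial machinery of \cite{SCZ-nil}, based on Malcev coordinates and controlled decompositions in nilpotent groups, becomes indispensable.
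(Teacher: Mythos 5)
Your argument is correct and, notably, it is a self-contained proof where the paper simply cites \cite[Theorem 5.7]{SCZ-nil} as a black box. The two-step averaging $x\to xa\to xs_i^n$ with $a$ uniform over the ball of radius $2|s_i^n|$, followed by the interchange of summation, does produce the bound $\tilde c(g)\lesssim \sum_i\sum_{n:|s_i^n|\gtrsim |g|}\phi(n)/V(|s_i^n|)$, and with $|s_i^n|\simeq |n|^{1/c_i}$, $V(r)\simeq r^D$, and the Karamata-type tail estimate $\sum_{n\ge N}n^{-1-\alpha}/\ell(n)\simeq N^{-\alpha}/\ell(N)$ for $\alpha>0$, the fixed-$i$ sum evaluates to $\simeq 1/\bigl(|g|^D\ell(|g|^{c_i})\bigr)$. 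Torsion generators contribute nothing once $|g|$ exceeds a fixed radius, so summing over the finitely many $i$ yields $\tilde c(g)\lesssim \nu(g)$.

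Two corrections to the way you justify the last step. First, the claim that a slowly varying $\ell$ satisfies $\ell(r^c)\simeq \ell(r)$ is false in general (e.g.\ $\ell(r)=\exp(\sqrt{\log r})$ has $\ell(r^c)/\ell(r)\to\infty$ for $c>1$). Fortunately you do not need it: since $\ell$ is increasing and every $c_i\ge 1$, the one-sided bound $\ell(|g|^{c_i})\ge \ell(|g|)$ already gives $1/\bigl(|g|^D\ell(|g|^{c_i})\bigr)\le 1/\bigl(|g|^D\ell(|g|)\bigr)\simeq\nu(g)$, which is all the theorem asserts. Second, your closing worry that the Malcev-coordinate machinery of \cite{SCZ-nil} is ``indispensable'' here is misplaced: that machinery is what produces controlled decompositions $g=\prod s_{i_j}^{x_j}$ and is needed for the reverse comparison $\mathcal E_\nu\lesssim\mathcal E_\mu$ (and for the pseudo-Poincar\'e inequalities in Section~4 of this paper). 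For the direction $\mathcal E_\mu\le C\mathcal E_\nu$ at hand, the only structural input you use about nilpotent groups is the cyclic-subgroup distortion estimate $|s_i^n|\simeq |n|^{1/c_i}$ together with Bass--Guivarc'h growth $V(r)\simeq r^D$, and your argument is then complete without invoking \cite[Theorem 5.7]{SCZ-nil}. So your route is genuinely more elementary than the paper's pointer to the external theorem, at the price of not covering the more general weight systems and length functions treated there.
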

\begin{proof} We apply \cite[Theorem 5.7]{SCZ-nil} with $\phi=\ell$ and 
$\|\cdot\|=|\cdot|$ (in the notation of \cite{SCZ-nil}, 
this corresponds to having a weight system $\mathfrak w$ generated 
by $w_i=1$ for all $i=1,\dots,k$, the simplest case). Referring to the 
notation used in \cite{SCZ-nil}, 
because of the choice 
$\|\cdot\|=|\cdot|$, we have $F_{c_1}(r)=r$, $F_{h_i}(r)= r^{m_i}$ 
where $m_i\ge 1$ ($m_i$ is an integer which describes the position of 
the generator $s_i$ in the lower central series of $G$, modulo torsion). 
Having made these observations, the stated 
result follows from \cite[Theorem 5.7]{SCZ-nil} by inspection.
\end{proof}

\begin{rem} Note that, in the context of Theorem \ref{th-comp} 
and for any positive function $\ell$ that is  
slowly varying at infinity, the conditions 
$$(a)\; \sum_1^\infty\frac{1}{n\ell(n)}<\infty;\;\; 
(b)\;\sum_1^\infty\frac{1}{n\ell(n^2)}<\infty;\;\; (c) 
\;\sum_{g\in G} \frac{1}{
V(|g|)\ell(1+|g|)},$$
are equivalent. To see that $(a)$ and $(c)$ are equivalent, note that
$$\sum_g\frac{1}{V(|g|)\ell(1+|g|)}\simeq 
\sum_k \frac{V(k)-V(k-1)}{\ell(1+ k)
(1+k)^D\ell(1+ k)}$$
and use Abel summation formula to see that this implies
$$\sum_g\frac{1}{V(|g|)\ell(1+|g|)}\simeq  
\sum_k \frac{k^D}{
(1+k)^{D+1}\ell(1+k)}\simeq \sum_k \frac{1}{k\ell(k)}.$$ 
\end{rem}

The following statement illustrates one of the basic consequences 
of this comparison theorem.

\begin{theo} \label{th-muell}
Let $\ell: [0,\infty)\ra [0,\infty)$ be continuous increasing, 
slowly varying at infinity, and such that 
$\int_0^{1/s} \frac{dt}{t\ell(1/t)} <\infty$. Set
$\theta(s)= 1/\int_0^{1/s} \frac{dt}{t\ell(1/t)}$.
Let $G$ be a finitely generated nilpotent group equipped with a 
generating $k$-tuple $S=(s_1,\dots,s_k)$. Let $\mu$ be the symmetric 
probability measure on $G$  defined by
$$\mu(g)=k^{-1}\sum_1^k\sum_{n\in \mathbb Z} 
\frac{c \mathbf 1_{s_i^n}(g)}{(1+|n|)\ell(1+|n|^2)}.$$
\begin{itemize} 
\item If $\log \theta^{-1} (u) \simeq u^\gamma \kappa(u)^{1+\gamma}$
with $\gamma\in (0,\infty)$ and $\kappa$ slowly varying at infinity, then we have
$$\mu^{(n)}(e)\simeq \exp\left(
- n^{\gamma/(1+\gamma)}/\kappa^{\#}(n^{1/(1+\gamma)})\right)$$
where $\kappa^{\#}$ is the de Bruijn conjugate of $\kappa$.
\item If $\kappa=\theta \circ \exp$ is slowly varying and satisfies 
$s\kappa^{-1}(s)\simeq \kappa^{-1}(s)$ then
$$\mu^{(n)}(e)\simeq \exp\left(- n/\kappa(n)\right).$$ 
\end{itemize}
\end{theo}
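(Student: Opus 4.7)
The plan is to bracket the return probability $\mu^{(n)}(e)$ between matching bounds derived from the subordinated measure furnished by Theorem \ref{th-phipsi2}. First, I apply Theorem \ref{th-phipsi2} with the given $\ell$ and a fixed finitely supported symmetric probability measure $\phi$ on $G$ with generating support and $\phi(e)>0$. This produces a complete Bernstein function $\psi$ and a subordinated measure $\phi_\psi$ satisfying $\phi_\psi(x)\simeq[(1+|x|)^D\ell(1+|x|^2)]^{-1}$, with return probability $\phi_\psi^{(n)}(e)$ already having exactly the two asymptotic behaviors claimed in the statement of Theorem \ref{th-muell}.

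For the lower bound on $\mu^{(n)}(e)$: under the slow variation of $\ell$ (in particular for iterated logarithms), one has $\ell(1+|g|^2)\simeq\ell(1+|g|)$, so the radial measure $\nu(g)\propto[V(1+|g|)\ell(1+|g|)]^{-1}$ appearing in Theorem \ref{th-comp} is equivalent pointwise to $\phi_\psi$ and hence $\mathcal{E}_\nu\simeq\mathcal{E}_{\phi_\psi}$. Theorem \ref{th-comp} yields $\mathcal{E}_\mu\le C\mathcal{E}_\nu\simeq C'\mathcal{E}_{\phi_\psi}$, and the standard spectral comparison for reversible Markov operators with dominated Dirichlet forms (as in \cite{PSCstab}) translates this into $\mu^{(2n)}(e)\ge c\phi_\psi^{(2cn)}(e)$, which, combined with the asymptotic of $\phi_\psi^{(n)}(e)$, gives the desired lower bound.

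The matching upper bound on $\mu^{(n)}(e)$ is the main obstacle, because Theorem \ref{th-comp} gives Dirichlet form dominance in only one direction. My plan is to establish the reverse comparison $\mathcal{E}_{\phi_\psi}\le C''\mathcal{E}_\mu$ by using nilpotency: every $y\in G$ admits a Mal'cev-type factorization $y=s_{i_1}^{k_1}\cdots s_{i_m}^{k_m}$ with $m$ uniformly bounded and $|k_j|$ polynomially controlled by $|y|$, so a Cauchy--Schwarz telescoping along such factorizations decomposes a single $\phi_\psi$-jump into a controlled sum of generator-power jumps. The slow variation of $\ell$ is precisely what ensures that the weight bookkeeping survives the decomposition with only universal multiplicative losses. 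This yields $\mu^{(2n)}(e)\le C\phi_\psi^{(2cn)}(e)$ and closes the argument. An alternative route, likely cleaner in practice, is to bypass the reverse Dirichlet form comparison entirely and combine the Pseudo-Poincar\'e inequalities of Section 4 with a Nash-type argument to derive the upper bound on $\mu^{(n)}(e)$ directly.
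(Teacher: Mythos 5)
Your lower bound argument follows the paper's route (Theorem \ref{th-phipsi2} to construct $\phi_\psi$, Theorem \ref{th-comp} for $\mathcal E_\mu\le C\mathcal E_\nu$, then the Pittet--Saloff-Coste comparison), but the specific claim that ``$\ell(1+|g|^2)\simeq\ell(1+|g|)$ under slow variation'' is false in general (consider $\ell(t)=\exp(\sqrt{\log t})$, which is slowly varying but has $\ell(t^2)/\ell(t)\to\infty$). What you actually need is simply to apply Theorem \ref{th-comp} with the slowly varying function $\tilde\ell(t):=\ell(t^2)$ in place of $\ell$; the resulting $\nu$ is then pointwise comparable to $\phi_\psi$ without any such unwarranted identity.

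Your primary route for the upper bound --- establishing the reverse Dirichlet form comparison $\mathcal E_{\phi_\psi}\le C\mathcal E_\mu$ by Mal'cev factorization and telescoping --- has a genuine gap that cannot be patched. Decomposing each jump $y$ into a bounded number of generator powers of size $\le C|y|$ and using the pseudo-Poincar\'e inequality of Corollary \ref{cor-PP1} bounds $\sum_x|f(xy)-f(x)|^2$ by $C\min\{1/\mathcal H_\phi(C|y|),\,|y|^2/\mathcal G_\phi(C|y|)\}\,\mathcal E_\mu(f,f)$. To conclude $\mathcal E_{\phi_\psi}\le C\mathcal E_\mu$ you must then integrate against $\phi_\psi(y)$, which requires
\[
\sum_{y\in G}\phi_\psi(y)\,\min\Bigl\{\tfrac{1}{\mathcal H_\phi(C|y|)},\ \tfrac{|y|^2}{\mathcal G_\phi(C|y|)}\Bigr\}<\infty.
\]
Here $1/\mathcal H_\phi(r)\simeq\theta(r^2)$, $|y|^2/\mathcal G_\phi(|y|)\simeq\ell(|y|^2)$, and $\theta\ll\ell$ for slowly varying $\ell$; thus the minimum is $\theta(|y|^2)$, and the sum becomes $\sum_r\theta(r^2)/(r\ell(r^2))\simeq\frac12\int^\infty\frac{\theta(u)}{u\ell(u)}\,du=\frac12\int^\infty\frac{\theta'(u)}{\theta(u)}\,du=\frac12\log\theta\bigm|^\infty$, which diverges. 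So the telescoping does not close, and in fact this signals that the reverse Dirichlet form domination is not the right mechanism here.

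The paper's actual argument for the upper bound is much shorter and avoids any reverse Dirichlet form comparison: since $\mu=\frac1k\sum_i\mu_i$ with each $\mu_i$ supported on $\langle s_i\rangle$, one has $\mathcal E_{\mu_1}\le k\,\mathcal E_\mu$ for any non-torsion $s_1$, whence by Pittet--Saloff-Coste $\mu^{(2n)}(e)\le C\mu_1^{(2cn)}(e)$, and the right-hand side is the return probability of a one-dimensional walk on $\mathbb Z$ driven by $\phi(m)\simeq[(1+|m|)\ell(1+m^2)]^{-1}$, whose asymptotic is exactly the claimed one (the exponent $\gamma/(1+\gamma)$ and the $\kappa$-correction are independent of the degree $D$). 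Your alternative route --- a Nash/Faber--Krahn iteration from the pseudo-Poincar\'e profile $\theta(r^2)$ together with $V(r)\simeq r^D$ --- would also yield the correct upper bound and is a legitimate genuinely different approach, but it is more work than the paper's one-generator reduction and you should promote it from ``alternative'' to ``the route that actually works,'' since the Mal'cev telescoping fails.
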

\begin{proof} The lower bounds follow from Theorems \ref{th-phipsi2} 
and \ref{th-comp}, together with \cite[Theorem 2.3]{PSCstab}. To prove the 
upper bounds,
forget all but one non-torsion generator, say $s_1$, and  use 
\cite[Theorem ]{PSCstab} to compare with the corresponding one-dimensional 
random walk on $\{s_1^n: n\in \mathbb Z\}$. 
\end{proof}
\begin{rem} Assume that $\mu$ is given by (\ref{def-mu}) with possibly different
$\mu_i$ of the form $\mu_i(m) \simeq 1/[(1+|m|)\ell_i(1+|m|^2)]$. Let $\ell,\theta$ be as in Theorem \ref{th-muell}. If there exists $i\in \{1,\dots, k\}$ such that $s_i$ is not torsion and $\ell_i\le C\ell$ then $\mu^{(2n)}(e)$ 
can be bounded above by the convolution power $\phi^{(2n)}(0)$
of the one dimensional 
symmetric probability measure $\phi(m)= c/(1+|m|)\ell(1+|m|^2)$. If we assume that
for all $i\in \{1,\dots,k\}$ such that $s_i$ is not torsion we have 
$\ell_i\ge c\ell$ then we obtain a lower bound for 
$\mu^{(2n)}(e)$ in terms of $\phi^{(2n)}(0)$.
\end{rem}

\section{Pseudo-Poincar\'e inequality}
In \cite{SCZ-nil}, the authors proved and use new (pointwise) 
pseudo-Poincar\'e inequalities adapted to spread-out probability measures.
These pseudo-Poincar\'e inequalities are proved for measures of type 
(\ref{def-mu}) and involve the truncated second moments of the one 
dimensional probability measures $\mu_i$. More precisely, fix $s\in G$ and let $\phi$ be a symmetric probability measure on $\mathbb Z$. 
It is proved in \cite{SCZ-nil} that, if we set
$$\mathcal E_{s,\phi}(f,f)=
\frac{1}{2}
\sum_{x\in G}\sum_{n\in \mathbb Z}|f(xs^n)-f(x)|^2\phi(n),\;\;\mathcal G_\phi(r)
=\sum_{|n|\le r}|n|^2\phi(n),$$  
and assume that there exists a constant $C$ such that 
$\phi(n)\le C\phi(m)$ for all $|m|\le |n|$, then it holds that
\begin{equation}
\sum_{x\in G} |f(xs^n)-f(x)|^2\le C_\phi(\mathcal G_\phi(|n|))^{-1}|n|^2
\mathcal E_{s,\phi}(f,f).
\end{equation}
Under the same notation and hypotheses, set 
$\mathcal H_\phi(r)= \sum_{|n|>r}\phi(n)$.  Then we claim that\begin{equation}
\sum_{x\in G}|f(xs^n)-f(x)|^2\le C'_\phi(\mathcal H_\phi(|n|))^{-1}
\mathcal E_{s,\phi}(f,f).
\end{equation} 
Indeed, write 
$$|f(xs^n)-f(x)|^2\le 2(|f(xs^n)-f(xs^m)|^2+|f(xs^m)-f(x)|^2).$$
Note note that the set $\{m: |n-m|\le |m|\}$ contains $\{m:m\ge n\}$ 
if $n$ is positive and $\{m: m\le n\}$ if $n$ is negative. 
Multiply both sides of the displayed  inequality above
 by $\phi(m)$ and sum over $x\in G$ 
and  $m$ such that $|n-m|\le |m|$ to obtain 
\begin{eqnarray*}
\lefteqn{\left(\sum_{x\in G}|f(xs^n)-f(x)|^2\right)\mathcal H_\phi(|n|)}&&\\
&\le & 
4\sum_{x\in G}\sum_{|m|\ge |n|}(|f(xs^n)-f(xs^m)|^2\phi(m) +|f(xs^m)-f(x)|^2\phi(m))\\
&\le & 4\sum_{x\in G}\sum_{m\in \mathbb Z}(|f(xs^{n-m})-f(x)|^2\phi(m) +
|f(xs^m)-f(x)|^2\phi(m))\\
\\
&\le & 4C\sum_{x\in G}\sum_{m\in \mathbb Z}(|f(xs^{n-m})-f(x)|^2\phi(n-m) +
|f(xs^m)-f(x)|^2\phi(m))\\
&= & 16C \mathcal E_{s,\phi}(f,f).
\end{eqnarray*}
Putting together this simple computation and the earlier results from 
\cite{SCZ-nil}, we can state the following theorem.
\begin{theo} \label{th-PP}
Let $\phi$ be a symmetric probability measure on 
$\mathbb Z$ such that
there exists a constant $C$ for which, for all $|m|\le |n|$, 
$\phi(n)\le C \phi(m)$. There exists a constant $C_\phi$ such that, for any group $G$ and any $s\in G$, we have
$$\forall\,n, \;\;\sum_{x\in G}|f(xs^n)-f(x)|^2\le C_\phi 
\min\left\{\frac{1}{\mathcal H_\phi(|n|)},\frac{|n|^2}{\mathcal G_\phi(|n|)}
\right\} \mathcal E_{s,\phi}(f,f).$$
\end{theo}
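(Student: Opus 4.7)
The plan is to prove the two upper bounds that appear inside the minimum separately and then combine them by taking the smaller one. The bound involving $|n|^2/\mathcal G_\phi(|n|)$ is nothing but the pseudo-Poincar\'e inequality from \cite{SCZ-nil} recalled just above the statement, which is valid under the stated regularity hypothesis $\phi(n) \le C\phi(m)$ for $|m|\le|n|$ and produces a constant depending only on the doubling constant $C$; I would simply quote it. The bound involving $\mathcal H_\phi^{-1}$ is the content of the four-line computation displayed immediately before the theorem, which I would formalize as follows.

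Starting from the elementary inequality
\[
|f(xs^n)-f(x)|^2 \le 2|f(xs^n)-f(xs^m)|^2 + 2|f(xs^m)-f(x)|^2,
\]
valid for every $m \in \mathbb Z$, multiply through by $\phi(m)$ and sum over $x\in G$ and over $m$ in the set $A_n = \{m : |n-m| \le |m|\}$. A short case analysis on the sign of $n$ shows that $A_n \supset \{m : m \ge n\}$ when $n > 0$ and $A_n \supset \{m : m \le n\}$ when $n < 0$, so by symmetry of $\phi$ one has $\phi(A_n) \ge \tfrac12 \mathcal H_\phi(|n|)$. This produces the desired $\mathcal H_\phi$ factor on the left-hand side.

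On the right-hand side, still restricted to $m \in A_n$, I would apply the regularity hypothesis in the form $\phi(m) \le C\phi(n-m)$ (valid precisely because $|n-m| \le |m|$ on $A_n$) to replace $\phi(m)$ by $C\phi(n-m)$ in the first term. After this swap I would extend the sum to all $m \in \mathbb Z$ (the summands are nonnegative) and change variables $y = xs^m$, $k = n-m$ in the first term to recognize a copy of $2\mathcal E_{s,\phi}(f,f)$; the second term, after extending to all $m$, is again $2\mathcal E_{s,\phi}(f,f)$. Combining yields $\sum_{x\in G} |f(xs^n)-f(x)|^2 \le C'_\phi \mathcal H_\phi(|n|)^{-1} \mathcal E_{s,\phi}(f,f)$.

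Setting $C_\phi$ equal to the larger of the two constants produced and taking the smaller of the two right-hand sides finishes the proof. The only substantive point in the whole argument is the doubling swap $\phi(m) \mapsto C\phi(n-m)$ on $A_n$; without the regularity hypothesis on $\phi$ this is false and the splitting trick fails to collapse to the Dirichlet form, so this is where the hypothesis is actually used. Everything else is bookkeeping.
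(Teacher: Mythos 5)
Your proof is correct and follows essentially the same route as the paper: the $|n|^2/\mathcal G_\phi(|n|)$ bound is quoted from \cite{SCZ-nil}, and the $\mathcal H_\phi(|n|)^{-1}$ bound is derived via the telescoping inequality $|f(xs^n)-f(x)|^2 \le 2|f(xs^n)-f(xs^m)|^2 + 2|f(xs^m)-f(x)|^2$ summed against $\phi(m)$ over $m$ in the set $A_n=\{m:|n-m|\le|m|\}$, using the observation $\phi(A_n)\ge\tfrac12\mathcal H_\phi(|n|)$ on the left and the doubling swap $\phi(m)\le C\phi(n-m)$ plus the group change of variables $y=xs^m$ on the right. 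Your ordering of steps (apply the doubling inequality while still restricted to $A_n$, only then extend the $m$-sum to all of $\mathbb Z$) is in fact a bit more careful than the paper's displayed chain, and is the logically correct order; the argument is the same.
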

\begin{rem}
 If $\phi$ is regularly varying of index $\alpha\in (-3,-1)$, then 
$\mathcal H_\phi(r)\simeq  r^{-2}\mathcal G_\phi(r)$. 
If $\phi $ is regularly varying of index $\alpha<-3$ then 
$\mathcal H_\phi(r)$ is much smaller than $r^{-2}\mathcal G_\phi(r)$.
When $\phi$ is regularly varying of index $-1$ then 
$\mathcal H_\phi(r)$ is much larger than $r^{-2}\mathcal G_\phi(r)$.
\end{rem}

\begin{cor}\label{cor-PP1}
Let $\phi$ be a symmetric probability measure on $\mathbb Z$ such that
there exists a constant $C$ for which, for all $|m|\le |n|$, 
$\phi(n)\le C \phi(m)$. 
Let $G$ be a finitely generated nilpotent group equipped with a 
generating $k$-tuple $S=(s_1,\dots,s_k)$. Let $\mu$ be the symmetric 
probability measure on $G$  defined by
$$\mu(g)=k^{-1}\sum_1^k\sum_{n\in\mathbb Z} 
\phi(n)\mathbf 1_{s_i^n}(g).$$
Then there are constants $C_1,C_2$ such that, for all $g\in G$, we have
$$\forall\, f\in L^2(G), \sum_{x\in G}|f(xg)-f(x)|^2\le 
C_1 
\min\left\{\frac{1}{\mathcal H_\phi(C_2|g|)},\frac{|g|^2}{\mathcal G_\phi(C_2|g|)}
\right\} \mathcal E_{\mu}(f,f).$$
\end{cor}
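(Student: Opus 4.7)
The plan is to reduce the corollary to Theorem \ref{th-PP} applied one generator at a time, using the nilpotent structure of $G$ to factor $g$ as a \emph{bounded-length} product of pure-power factors of generators.

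The first ingredient is a decomposition lemma: there exist an integer $L$ and a constant $C_0$, depending only on $G$ and the generating tuple $S$, such that every $g \in G$ admits a representation
$$g = s_{i_1}^{n_1} s_{i_2}^{n_2} \cdots s_{i_L}^{n_L}$$
in which each factor $s_{i_j}^{n_j}$ has word-length at most $C_0|g|$. This is a standard consequence of Malcev coordinates on nilpotent groups: a weight-$m$ displacement of size $r^m$ is achievable at word-cost $O(r)$ via iterated commutators of powers of weight-$1$ generators (for instance $[s_i^a, s_j^a]$ realizes a weight-$2$ displacement of size $\sim a^2$ at word-length $O(a)$), and iterating through the lower central series yields a bounded number of pure-power factors. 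This is exactly the geometric scaling already used in the proof of Theorem~\ref{th-comp}.

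Given this decomposition, telescoping and Cauchy--Schwarz yield
$$\sum_{x \in G} |f(xg) - f(x)|^2 \le L \sum_{j=1}^L \sum_{x \in G} |f(x s_{i_j}^{n_j}) - f(x)|^2$$
by translation-invariance of the counting sum. Theorem~\ref{th-PP} bounds each inner sum by $C_\phi \min\{1/\mathcal H_\phi(|n_j|),\; |n_j|^2/\mathcal G_\phi(|n_j|)\}\,\mathcal E_{s_{i_j},\phi}(f,f)$, and the identity $\mathcal E_\mu = k^{-1}\sum_i \mathcal E_{s_i,\phi}$ gives $\mathcal E_{s_{i_j},\phi} \le k\,\mathcal E_\mu$. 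What remains is to pass from $|n_j|$ to $|g|$. For weight-$1$ generators one has $|n_j| \le |s_{i_j}^{n_j}| \le C_0|g|$, so monotonicity of $\mathcal H_\phi$ gives $1/\mathcal H_\phi(|n_j|) \le 1/\mathcal H_\phi(C_0|g|)$ directly; on the other hand the almost-monotonicity assumption $\phi(n) \le C\phi(m)$ for $|m|\le|n|$ forces a doubling property $\mathcal G_\phi(2r) \le C'\mathcal G_\phi(r)$ (since $\mathcal G_\phi(2r)-\mathcal G_\phi(r)$ is dominated by $r^3\phi(r)$, which is in turn dominated by $\mathcal G_\phi(r)$ via the dyadic contribution from $[r/2,r]$), so that $r \mapsto r^2/\mathcal G_\phi(r)$ is quasi-increasing and $|n_j|^2/\mathcal G_\phi(|n_j|) \le C''\,|g|^2/\mathcal G_\phi(C_0|g|)$. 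Summing the $L$ contributions absorbs all constants into $C_1$ and $C_2 = C_0$.

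The main obstacle is the decomposition lemma. Producing \emph{boundedly many} pure-power factors is precisely what nilpotence buys: if one only used the trivial factorization from a geodesic word (with each factor a single generator $s_i^{\pm 1}$), Theorem~\ref{th-PP} would collapse to the uninformative bound $\sum_x |f(xg)-f(x)|^2 \le C|g|^2\,\mathcal E_\mu(f,f)$, losing exactly the $\mathcal H_\phi$ and $\mathcal G_\phi$ refinements that make this inequality useful. A secondary subtlety I would want to verify is the handling of a generator $s_i$ in $S$ that lies in a deeper stratum of the lower central series, for which $|n|$ and $|s_i^n|$ are not comparable; the cleanest case, and the one relevant to the applications in Section~5, is the one where $S$ consists of weight-$1$ generators, in which case the reductions above go through as stated.
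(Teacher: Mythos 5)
Your proof follows the same strategy as the paper's: factor $g$ as a bounded product of pure powers of generators, telescope via Cauchy--Schwarz, bound each factor's contribution by Theorem \ref{th-PP}, and use $\mathcal E_{s_i,\phi}\le k\,\mathcal E_\mu$. The paper resolves the higher-weight subtlety you flag by invoking \cite[Theorem 2.10]{SCZ-nil} with the constant weight system $w_i\equiv 1$, which produces $g=\prod_{j=1}^M s_{i_j}^{x_j}$ with the \emph{exponents} bounded, $|x_j|\le C_0|g|$, not merely the word-lengths of the factors --- and the exponent bound is precisely what Theorem \ref{th-PP} consumes, independently of the weight of $s_{i_j}$. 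Your doubling argument for $\mathcal G_\phi$ is correct, but it can be replaced by the cleaner observation that $\min\{1/\mathcal H_\phi(r),\,r^2/\mathcal G_\phi(r)\}$ agrees up to a factor of $2$ with $\bigl(\mathcal H_\phi(r)+r^{-2}\mathcal G_\phi(r)\bigr)^{-1}=\bigl(\sum_n \min(1,|n|^2/r^2)\phi(n)\bigr)^{-1}$, which is nondecreasing in $r$ without any regularity assumption on $\phi$.
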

\begin{proof} 
Apply \cite[Theorem 2.10]{SCZ-nil} in the 
simplest case when the weight system $\mathfrak w$ is generated by constant weights $w_i=1$, $1\le i\le k$, so that the corresponding length function on $G$ is 
just the  word-length $g\mapsto |g|$.  This result yields the existence of 
a constant $C_0$, an integer $M$ and a 
sequence $(i_1,\dots i_M)\in \{1,\dots, k\}^M$ such that any element 
$g\in G$ can be written in the form
$$g=\prod_{j=1}^M s_{i_j}^{x_j} \mbox{ with } |x_j|\le C_0|g|.$$
Further, by construction, for each $i\in \{1,\dots,k\}$,
$$\mathcal E _{s_i,\phi}\le k \mathcal E_\mu .$$
Hence, the stated Corollary follows easily from a finite telescoping sum 
argument and Theorem \ref{th-PP}.
\end{proof}

\begin{theo}\label{th-PPpol}
Let $\ell: [0,\infty)\ra [0,\infty)$ be continuous increasing, 
slowly varying at infinity, and such that 
$\int_0^{1/s} \frac{dt}{t\ell(1/t)} <\infty$. Set
$\theta(s)= 1/\int_0^{1/s} \frac{dt}{t\ell(1/t)}$. 
Let $G$ be a finitely generated group with word-length $|\cdot|$ and  polynomial 
volume growth of degree $D$. Let $\varphi$ 
be a symmetric probability measure on $G$ such that
$$\varphi(g)\simeq \frac{1}{
(1+|g|)^D\ell(1+|g|^2)}.$$
Then, there exists a constant $C$ such that for any $g\in G$ and any 
$f\in L^2(G)$,
$$\sum_{x\in G}|f(xg)-f(x)|^2\le C \theta (1+|g|^2)\mathcal E_\varphi(f,f).$$
\end{theo}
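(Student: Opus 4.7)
The plan is to deduce the inequality by importing the one-dimensional Pseudo-Poincar\'e estimate of Corollary \ref{cor-PP1} through a measure $\mu$ on $G$ supported on powers of the generators, and then passing from $\mathcal E_\mu$ to $\mathcal E_\varphi$ via the Dirichlet form comparison of Theorem \ref{th-comp}.

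First I would take as the one-dimensional ingredient the symmetric probability measure $\phi$ on $\mathbb Z$ with $\phi(n)\simeq 1/[(1+|n|)\ell(1+|n|^2)]$ (its finiteness follows from the summability hypothesis on $\ell$ after the change of variable $v=n^2$) and form the $G$-measure
$$\mu(g) = k^{-1}\sum_{i=1}^{k}\sum_{n\in\mathbb Z}\phi(n)\mathbf 1_{s_i^n}(g).$$
Since $\ell$ is increasing, $\phi$ is monotone in the sense required by Corollary \ref{cor-PP1}. Using the substitution $v = u^2$ and Karamata's theorem for slowly varying functions, one computes
$$\mathcal G_\phi(r) \simeq \frac{r^2}{\ell(1+r^2)}, \qquad \mathcal H_\phi(r) \simeq \int_{r^2}^{\infty}\!\frac{dv}{v\ell(v)} \simeq \frac{1}{\theta(1+r^2)}.$$
The elementary bound $\int_{s}^{\infty} du/(u\ell(u)) \ge (\log 2)/\ell(2s)$, a consequence of monotonicity of $\ell$, combined with slow variation yields $\theta(s) \lesssim \ell(s)$ at infinity; hence the minimum appearing in Corollary \ref{cor-PP1} simplifies to $\theta(1+|g|^2)$ up to constants.

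Assuming first that $G$ is nilpotent, Corollary \ref{cor-PP1} then yields
$$\sum_{x\in G}|f(xg)-f(x)|^2 \le C\,\theta(1+|g|^2)\,\mathcal E_\mu(f,f).$$
To replace $\mathcal E_\mu$ by $\mathcal E_\varphi$ I would invoke Theorem \ref{th-comp} with $\tilde\ell(s) = \ell(1+s^2)$ in place of $\ell$. This $\tilde\ell$ is again positive, slowly varying, and satisfies the same summability condition; the associated radial measure is $\nu(g) \simeq 1/[V(1+|g|)\tilde\ell(1+|g|)] \simeq 1/[(1+|g|)^D\ell(1+|g|^2)] \simeq \varphi(g)$. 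Hence Theorem \ref{th-comp} gives $\mathcal E_\mu \le C'\mathcal E_\nu \simeq C''\mathcal E_\varphi$, which chained with the preceding display finishes the nilpotent case.

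To handle the merely virtually nilpotent setting of the theorem I would pass to a finite-index nilpotent subgroup $H\le G$ (existence from Gromov's theorem), use bilipschitz equivalence of the word-lengths on $G$ and on $H$ with respect to compatible generating sets, and invoke the standard Dirichlet form comparison between $G$ and a finite-index subgroup as used in \cite{PSCstab}. The main technical obstacle is the computation of $\mathcal G_\phi$ and $\mathcal H_\phi$ in terms of $\theta$ and the Karamata-type reduction $\min\{\theta,\ell\}\simeq \theta$; once these slow-variation bookkeeping steps are in hand, everything else flows from the structural results already established in Sections 2--3.
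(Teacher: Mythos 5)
Your argument follows essentially the same route as the paper: form the power-of-generators measure $\mu$ from $\phi(n)\simeq 1/[(1+|n|)\ell(1+n^2)]$, apply Corollary~\ref{cor-PP1}, identify $1/\mathcal H_\phi(r)\simeq\theta(1+r^2)$, and pass from $\mathcal E_\mu$ to $\mathcal E_\varphi$ via Theorem~\ref{th-comp} with $\tilde\ell(s)=\ell(1+s^2)$. The computations in the nilpotent case are correct; in fact your extra work computing $\mathcal G_\phi$ and showing $\theta\lesssim\ell$ is unnecessary, since one only needs the trivial bound $\min\{1/\mathcal H_\phi,\ r^2/\mathcal G_\phi\}\le 1/\mathcal H_\phi$ to obtain the factor $\theta(1+|g|^2)$.

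The one place where you gloss over a real step is the passage from nilpotent $G_0$ to virtually nilpotent $G$. Citing ``the standard Dirichlet form comparison between $G$ and a finite-index subgroup as used in \cite{PSCstab}'' is not quite on target: that machinery is designed to transfer return-probability asymptotics, whereas here one must transfer a pseudo-Poincar\'e inequality that holds for \emph{every} $g\in G$, including elements not in $G_0$. The paper does this concretely: fix finite coset representatives $A$ for $G_0\backslash G$ and $B$ for $G/G_0$; for $g\in G$ write $g=g_0b$ with $g_0\in G_0$, $b\in B$; decompose the sum $\sum_{x\in G}$ as $\sum_{a\in A}\sum_{x_0\in G_0}$ and apply the nilpotent case to each $f_a(x_0)=f(ax_0)$, using $\|g_0\|\simeq|g_0|$; finally absorb the displacement by $b$ using the fact that $B$ is finite, so $\sum_x|f(xb)-f(x)|^2\le C\mathcal E_\varphi(f,f)$ trivially. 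This two-part decomposition is needed and is not supplied by a black-box finite-index comparison; you should spell it out rather than delegate it.
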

\begin{proof} As a key first step in the proof of this theorem, consider 
the special case when $G$ is a finitely generated nilpotent group equipped 
with a generating $k$-tuple $S=(s_1,\dots,s_k)$. In this case, the theorem 
follows from Corollary \ref{cor-PP1} 
and Theorem\ref{th-comp} by inspection after noting that
$$\mathcal H _\phi (r) \simeq 1/\theta (r^2).$$ 

Next, consider the general case when $G$ has 
polynomial volume growth of degree $D$. Then, by Gromov's theorem \cite{Grom},  
$G$ contains a finitely generated nilpotent group $G_0$  of finite index in $G$.
Fix finite symmetric generating sets in $G$ and $G_0$. Let $|\cdot|$ 
be the word-length in $G$ and $\|\cdot\|$ be the word-length in $G_0$. 
It is well-known that, for any $g_0\in G_0\subset G$, we have
$\|g_0\|\simeq |g_0|$. 

Let $A,B$ be finite sets of coset representatives for  $G_0\backslash G$
and $G/G_0$, respectively. Fix $g\in G$ 
and write $g=g_0b$, $g_0\in G_0$, $b\in B$. 
Observe that 
$G=\{x=ax_0: a\in A, x_0\in G_0\}$. Hence, for any $f\in L^2(G)$, we can write
$$\sum_{x\in G}|f(xg)-f(x)|^2=
\sum_{a\in A}\sum_{x_0\in G_0}|f(ax_0g_0b)-f(ax_0)|^2.$$
Applying the result already proved for nilpotent groups to $G_0$ and the 
functions $f_a: G_0\ra \mathbb R, f_a(x_0)=f(ax_0)$, $a\in A$, we obtain
$$\sum_{x_0\in G_0}|f(ax_0g_0)-f(ax_0)|^2\le \frac{C \theta(1+\|g_0\|^2)}{2}
\sum_{x_0,y_0\in G_0}\frac{|f(ax_0y_0)-f(ax_0)|^2}
{\ell(1+\|y_0\|^2)(1+\|y_0\|)^D}.$$
Summing over $a\in A$ and using the fact that $\|g_0\|\simeq |g_0|$ easily yield
$$\sum_{x\in G}|f(xg_0)-f(x)|^2\le C \theta(1+|g_0|^2)\mathcal E_\varphi(f,f).$$
Since we trivially have 
$$\forall\,b\in B,\;\;\sum_{x\in G}|f(xb)-f(x)|^2\le C \mathcal E_\varphi(f,f),$$
the desired result follows.
\end{proof}

\section{Probability of return lower bounds under weak-moment conditions}

In this section we use the results obtained in earlier Sections 
together with \cite[Theorem 2.10]{BSClmrw} to prove our main theorem, 
Theorem \ref{th-mainl}. Note that Theorem \ref{th-main} stated in the 
introduction is  an immediate corollary of this more general result.  

\begin{theo} \label{th-mainl}
Let $G$ be a finitely generated group with word-length $|\cdot|$ and
polynomial volume growth of degree $D$.  Let $\ell:[0,\infty)\ra [0,\infty)$ 
be a positive continuous increasing function
which is slowly varying at infinity and satisfies
$\int_1^\infty \frac{dt}{t\ell(t)}<\infty$. Set 
$\theta(s)=1/\int_0^{1/s}\frac{dt}{t\ell(1/t)}$ and 
$\theta_2(s)=\frac{1}{2}\theta(s^2)$. Let $\varphi$ 
be a symmetric probability measure such that
$$\varphi(g)\simeq \frac{1}{(1+|g|)^D\ell(1+|g|^2)}.$$
Then we have
$$\widetilde{\Phi}_{G,\theta_2} (n)\simeq \varphi^{(n)}(e).$$
\end{theo}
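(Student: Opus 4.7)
The assertion splits into matching upper and lower bounds, both understood up to the constants hidden in $\simeq$.

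\emph{The upper bound: $\varphi$ is its own witness.} First I would verify that, after a harmless constant renormalization, $\varphi$ itself lies in $\widetilde{\mathcal S}_{G,\theta_2}$. Using polynomial volume growth together with Abel summation, one computes
\[
\varphi(\{g:|g|>r\})\simeq \int_r^\infty \frac{du}{u\,\ell(u^2)}=\frac{1}{2}\int_{r^2}^\infty\frac{dv}{v\,\ell(v)}\simeq \frac{1}{\theta(r^2)},
\]
the last step being the definition of $\theta$ after the substitution $t=1/u$. Since $\theta_2(r)=\tfrac12\theta(r^2)$, this shows $W(\theta_2,\varphi)<\infty$, so a rescaling of $\varphi$ is admissible in the infimum defining $\widetilde{\Phi}_{G,\theta_2}$, yielding $\widetilde{\Phi}_{G,\theta_2}(n)\le C\,\varphi^{(2n)}(e)$. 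The explicit behaviour of $\varphi^{(n)}(e)$ that makes this meaningful comes from Theorem \ref{th-phipsi2}, applied to the Bernstein function $\psi$ produced by Theorem \ref{th-cpsi} from $\ell$: the $\psi$-subordinate $\phi_\psi$ has, by Theorem \ref{th-phipsi1}, the same pointwise two-sided estimates as $\varphi$, and a standard Dirichlet-form comparison argument transfers its return probability estimates to $\varphi$.

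\emph{The lower bound: pseudo-Poincar\'e plus weak moment.} Fix any $\phi\in\widetilde{\mathcal S}_{G,\theta_2}$, so that $W(\theta_2,\phi)\le 2$. The pseudo-Poincar\'e inequality of Theorem \ref{th-PPpol} reads
\[
\sum_{x\in G}|f(xg)-f(x)|^2\le C\,\theta(1+|g|^2)\,\mathcal E_\varphi(f,f),\qquad g\in G.
\]
The plan is to feed this, together with the weak-moment hypothesis on $\phi$, into \cite[Theorem 2.10]{BSClmrw}, whose purpose is precisely to convert a pseudo-Poincar\'e inequality with weight $\theta(1+|g|^2)$ (i.e.\ a constant multiple of $\theta_2(|g|)$) and a finite weak-$\theta_2$-moment of $\phi$ into a two-sided comparison of the return probabilities of $\phi$ and $\varphi$. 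The weights match precisely because $\theta(1+|g|^2)\simeq 2\theta_2(|g|)$, so that machinery yields
\[
\phi^{(2n)}(e)\ge c\,\varphi^{(c'n)}(e).
\]
Taking the infimum over all $\phi\in\widetilde{\mathcal S}_{G,\theta_2}$ produces the matching lower bound, and combining with the upper bound above gives $\widetilde{\Phi}_{G,\theta_2}(n)\simeq \varphi^{(n)}(e)$.

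\emph{Anticipated obstacle.} The delicate point is the mismatch between the pointwise pseudo-Poincar\'e inequality with weight $\theta(1+|g|^2)$ and the weak-moment control $W(\theta_2,\phi)\le 2$: straightforward integration of the pseudo-Poincar\'e inequality against $\phi$ is divergent at infinity, exactly because weak moments do not sum. The actual transfer therefore proceeds by a dyadic truncation of $\phi$, using the weak-moment bound to discard the tails and the pseudo-Poincar\'e for $\varphi$ to handle each scale; this is the content of \cite[Theorem 2.10]{BSClmrw}. The main technical work is thus in setting up these dyadic pieces carefully, matching the normalizations with the definition of $\theta_2$, and checking that the hypotheses of \cite[Theorem 2.10]{BSClmrw} are met with our choice of $\varphi$ as the comparison measure.
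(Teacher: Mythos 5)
Your upper bound argument is essentially the paper's and is correct. The lower bound, however, has a genuine gap that reproduces exactly the non-sharp result that \cite{BSClmrw} already obtained and that this theorem is designed to improve.

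The problem is in how you invoke \cite[Theorem 2.10]{BSClmrw} (stated in the paper as Theorem \ref{th-W}). That theorem has the following structure: given a reference measure $\varphi_1$ satisfying a pseudo-Poincar\'e inequality with weight $\delta(|g|)^2$, and a measure $\mu$ with finite weak $\delta^{2\alpha}$-moment for some fixed $\alpha\in(0,1)$, the conclusion is $\mathcal E_\mu \le C\,\mathcal E_{\varphi_\alpha}$ where $\varphi_\alpha$ is the \emph{$\alpha$-subordinate} of $\varphi_1$, not $\varphi_1$ itself. The exponent $\alpha$ must be strictly less than $1$; it is precisely the subordination that absorbs the failure of the weak-moment condition to be summable, which you correctly flag as the obstacle. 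You propose to apply the theorem with $\varphi_1 = \varphi$, weight $\theta_2$, and moment function $\theta_2$. This corresponds to $\delta^2 = \delta^{2\alpha} = \theta_2$, i.e.\ $\alpha = 1$, which lies outside the theorem's range. If instead you fix $\alpha<1$, the hypothesis becomes a weak $\theta_2^{\alpha}$-moment (which does follow from $W(\theta_2,\mu)\le 2$), but the conclusion then compares $\mu$ to $\varphi_\alpha$, the $\alpha$-subordinate of $\varphi$, whose return probability decays strictly faster than that of $\varphi$ — roughly, if $\varphi^{(n)}(e)\simeq\exp(-n^\beta)$ then $\varphi_\alpha^{(n)}(e)\simeq\exp(-n^{\beta/\alpha})$. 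The resulting lower bound is therefore not sharp, which is precisely the gap in \cite{BSClmrw} that this theorem closes.

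What the paper actually does — and what is missing from your proposal — is to construct, for a fixed $\alpha\in(0,1)$, an \emph{auxiliary} reference measure $\phi_\psi$ with a \emph{heavier} tail than $\varphi$, namely built from a complete Bernstein function $\psi$ with $\psi\sim b\psi_0^{1/\alpha}$. This $\phi_\psi$ satisfies the pseudo-Poincar\'e inequality with the larger weight $\theta_2(|g|)^{1/\alpha}$ (so that $\delta^{2\alpha}=\theta_2$ as required by the weak-moment hypothesis), and — this is the crux — its $\alpha$-subordinate is $\phi_{\bar\psi}$ with $\bar\psi=\psi^\alpha\sim\bar a\,\psi_0$, which is comparable to $\varphi$. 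The construction of $\psi$ is nontrivial because $\psi_0^{1/\alpha}$ need not itself be a complete Bernstein function; one must pass through Theorem \ref{th-cpsi} using the derivative asymptotics $\psi_1'(s)\sim a^{-1+1/\alpha}/(\alpha s\,\ell(1/s)\,\theta_2^{1/\alpha - 1}(1/s))$. Without this two-level construction — go one level heavier, apply the theorem, subordinate back down — you cannot land on $\varphi$ and the argument only yields the weaker bounds already known.
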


The proof of this result is based on a simple special case of 
\cite[Theorem 2.10]{BSClmrw}. For clarity and 
the convenience of the reader, we state the precise statement we need. Abusing notation, if $\varphi_1$ is a probability measure and $c_\alpha(n)$ 
is defined by $1-(1-x)^\alpha=\sum c_\alpha(n)x^n$, $x\in [-1,1]$, 
$\alpha\in (0,1)$, we call $\varphi_\alpha=\sum c_\alpha(n)\varphi_1^{(n)}$ the 
$\alpha$-subordinate of $\varphi_1$.

\begin{theo}[See {\cite[Theorem 2.10]{BSClmrw}}] \label{th-W}
Let $G$ be a finitely generated group with word-length $|\cdot|$.
Let $\varphi_1$ be a symmetric probability measure on a group $G$ and $\delta$ be a positive increasing function with $\delta(0)=1$.  Assume that
, for any $g\in G$ and $f\in L^2(G)$,
$$\sum_{x\in G}|f(xg)-f(x)|^2\le C\delta(|g|)^2\mathcal E_{\varphi_1}(f,f).$$
Fix $\alpha\in (0,1)$. Let $\mu$ be a symmetric measure on $G$ satisfying the weak moment condition
$$W(\delta^{2\alpha},\mu)=\sup_{s>0}\left\{s\mu(\{g: \delta(|g|)^{2\alpha}>s\})\right\}<\infty.$$
Then, for all $f\in L^2(G)$, 
$$\mathcal E_\mu(f,f)\le C_\alpha C W(\delta^{2\alpha},\mu) \mathcal E_{\varphi_\alpha}(f,f)$$
where $\varphi_\alpha$ is the $\alpha$-subordinate of $\varphi_1$. In particular,
$$\mu^{(2n)}(e)\ge c\varphi_\alpha^{(2Nn)}(e).$$
\end{theo}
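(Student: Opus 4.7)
The plan is to recast the inequality in the spectral calculus of the self-adjoint non-negative operator $K=I-P_{\varphi_1}$ on $L^2(G)$, where $P_{\varphi_1}$ is the convolution operator $P_{\varphi_1}f(x)=\sum_y\varphi_1(y)f(xy)$, so that $\mathcal{E}_{\varphi_1}(f,f)=\langle Kf,f\rangle$. The defining series $1-(1-x)^\alpha=\sum c_\alpha(n)x^n$ together with the subordination formula $\varphi_\alpha=\sum c_\alpha(n)\varphi_1^{(n)}$ yields the operator identity $I-P_{\varphi_\alpha}=K^\alpha$, so $\mathcal{E}_{\varphi_\alpha}(f,f)=\int_0^\infty\lambda^\alpha\,d\mathfrak{m}_f(\lambda)$, where $\mathfrak{m}_f$ is the spectral measure of $K$ at $f$. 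The task is therefore to produce, for each $y\in G$, a pointwise-in-$\lambda$ bound on $\|R_yf-f\|_2^2$ which integrates cleanly against $\lambda^\alpha$ after summing over $y$ weighted by $\mu$.

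The first step is a standard spectral cut-off. For a threshold $\tau>0$, write $f=P_\tau f+(I-P_\tau)f$ with $P_\tau=\mathbf{1}_{[0,\tau]}(K)$; apply the hypothesized pseudo-Poincar\'e inequality to $P_\tau f$ and the trivial bound $\|R_yh-h\|_2^2\le 4\|h\|_2^2$ to $(I-P_\tau)f$ (here it is important that pseudo-Poincar\'e is a norm inequality valid for \emph{any} $L^2$ function, so no commutation between $R_y$ and $P_\tau$ is needed). This gives
$$\|R_yf-f\|_2^2\le 2C\delta(|y|)^2\int_0^\tau\lambda\,d\mathfrak{m}_f(\lambda)+8\int_\tau^\infty d\mathfrak{m}_f(\lambda).$$
The crucial $y$-dependent choice is $\tau(y)=\delta(|y|)^{-2}$; writing $\Psi_y(\lambda)=2C\delta(|y|)^2\lambda\mathbf{1}_{\lambda\le\tau(y)}+8\mathbf{1}_{\lambda>\tau(y)}$ and swapping the sum and the integral by Fubini,
$$\mathcal{E}_\mu(f,f)\le\tfrac12\int_0^\infty\Bigl(\sum_y\mu(y)\Psi_y(\lambda)\Bigr)d\mathfrak{m}_f(\lambda).$$

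The main technical step, and the hard part, is showing $\sum_y\mu(y)\Psi_y(\lambda)\le C'_\alpha CW\lambda^\alpha$ uniformly in $\lambda$ using only the weak-moment hypothesis (the strong moment $\sum\mu(y)\delta(|y|)^{2\alpha}$ may well be infinite). The second contribution, $8\mu(\{y:\delta(|y|)^{2\alpha}>\lambda^\alpha\})$, is immediately $\le 8W\lambda^\alpha$ by the definition of $W=W(\delta^{2\alpha},\mu)$. For the first contribution I would push $\mu$ forward under $y\mapsto u=\delta(|y|)^{2\alpha}$ to a measure $\nu$ on $[0,\infty)$ with $\nu((s,\infty))\le\min(1,W/s)$, and integrate by parts:
$$\int_0^{\lambda^{-\alpha}}u^{1/\alpha}\,d\nu(u)\le\tfrac1\alpha\int_0^{\lambda^{-\alpha}}u^{1/\alpha-1}\,\nu((u,\infty))\,du,$$
splitting the right side at $u=W$. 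The condition $\alpha<1$ makes $1/\alpha-1>0$, which is exactly what is needed for the layer-cake integrals to converge and produce the bound $C''_\alpha W\lambda^{\alpha-1}$; multiplying by $2C\lambda$ yields the desired $\lambda^\alpha$ behavior. The whole argument hinges on this balance between the spectral cut-off $\tau(y)=\delta(|y|)^{-2}$ and the weak-moment exponent $2\alpha$; neither piece alone suffices, and shifting the cut-off even slightly breaks the convergence.

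The ``in particular'' assertion $\mu^{(2n)}(e)\ge c\,\varphi_\alpha^{(2Nn)}(e)$ with $N=C_\alpha CW$ then follows from the operator inequality $I-P_\mu\le N(I-P_{\varphi_\alpha})$ just established, by the standard spectral comparison for symmetric random walks on discrete groups (as developed in \cite{PSCstab}): one iterates the variational formula $\mu^{(2n)}(e)=\|P_\mu^n\delta_e\|_2^2$ and compares to the $\varphi_\alpha$-walk time-changed by the factor $N$. This last step is routine once the Dirichlet-form comparison is in hand.
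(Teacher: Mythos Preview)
Your argument is correct. The paper does not actually prove this theorem: it simply records that it is the special case of \cite[Theorem~2.10]{BSClmrw} obtained by taking the operator $A=I-P_{\varphi_1}$ and the Bernstein function $\psi(s)=s^\alpha$ (so that $\omega(s)=\Gamma(2-\alpha)^{-1}s^{1-\alpha}$ and the associated moment function is $\rho(s)\simeq 1+s^{2\alpha}$), and then quotes \cite{PSCstab} for the return-probability comparison. What you have written is precisely the proof of that cited result specialized to $\psi(s)=s^\alpha$: the spectral cut-off at $\tau(y)=\delta(|y|)^{-2}$ followed by the layer-cake/integration-by-parts estimate under the weak-$\delta^{2\alpha}$-moment is exactly the mechanism behind \cite[Theorem~2.10]{BSClmrw}. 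So the two approaches coincide mathematically; yours is self-contained while the paper defers to the reference.

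One small slip: in your ``second contribution'' the level set should be $\{y:\delta(|y|)^{2\alpha}>\lambda^{-\alpha}\}$ (the condition $\lambda>\tau(y)=\delta(|y|)^{-2}$ gives $\delta(|y|)^{2\alpha}>\lambda^{-\alpha}$, not $>\lambda^{\alpha}$). With this correction the weak-moment bound gives $8\mu(\{y:\delta(|y|)^{2\alpha}>\lambda^{-\alpha}\})\le 8W\lambda^{\alpha}$, which is the inequality you state and use. You should also note, when splitting the first contribution at $u=W$, that in the complementary regime $\lambda^{-\alpha}\le W$ the whole integral is at most $\lambda^{-1}\le W\lambda^{\alpha-1}$, so the bound $C''_\alpha W\lambda^{\alpha-1}$ indeed holds for all $\lambda$ in the spectrum.
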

\begin{proof} This is a special case of \cite[Theorem 2.10]{BSClmrw}. Referring to the notation used in  \cite[Theorem 2.10]{BSClmrw}, the operator $A$ is taken to be $Af=f*(\delta_e-\varphi_1)$, the function  $\psi$ is simply 
$\psi(s)=s^\alpha$ so that $\omega(s)=\Gamma(2-\alpha)^{-1}s^{1-\alpha}$. 
It follows that the function $\rho$ satisfies $\rho (s)\simeq 1+s^{2\alpha}$. 
Note that,
by definition, $\|\psi(A)^{1/2}f\|_2^2=\mathcal E_{\phi_\alpha}(f,f)$.
The last statement in the theorem follows from \cite{PSCstab}.
\end{proof}

\begin{proof}[Proof of {Theorem \ref{th-mainl}}] To prove that 
$n\mapsto \widetilde{\Phi}_{G,\theta_2}(n)$ is controlled from above by  
$n\mapsto \varphi^{(2n)}(e)$, 
it suffices to show that 
$\varphi$ has a finite weak-$\theta_2$-moment. For $s\ge 1$, write
\begin{eqnarray*}
\varphi(\{g: \theta_2(|g|)>s\})&=& \sum _{|g|\ge \theta_2^{-1}(s) }\frac{1}{(1+|g|)^D\ell(1+|g|^2)}\\
&\simeq &\sum _{k\ge \theta_2^{-1}(s)} \frac{V(k)-V(k-1)}{(1+k)^D \ell(1+k^2)}\\
&\simeq &\sum _{k\ge \theta_2^{-1}(s)} \frac{1}{(1+k) \ell(1+k^2)}\\
&\simeq & 1/\theta_2 (\theta_2^{-1}(s))\simeq 1/s.
\end{eqnarray*}
This shows that $W(\theta_2,\varphi)<+\infty$. 
By \cite[Proposition 2.4]{BSClmrw},
this implies that there exist $N,C$ such that, for all $n$,  
$\widetilde{\Phi}_{G,\theta_2}(Nn)\le C\varphi^{(2n)}(e)$. 

The more interesting statement is the bound 
$$ \widetilde{\Phi}_{G,\theta_2}(n) \ge c \varphi^{(2Nn)}(e) .$$
Let $\phi$ be a symmetric finitely supported probability measure on 
$G$ with generating support and $\phi(e)>0$.  
Using the basic hypothesis regarding the function $\ell$ and Theorems 
\ref{th-cpsi} and \ref{th-phipsi1}, we can find a complete Bernstein function 
$\psi_0$ such that $ \psi_0'(s)\sim \frac{a}{s\ell(1/s)}$,  
 $\psi_0 \sim \frac{a}{\theta(1/s)}$ at $0^+$  (for some $a>0$) and 
$$\phi_{\psi_0}(g)\simeq \frac{1}{(1+|g|)^D\ell (1+|g|^2)}.$$
This implies $\phi_{\psi_0}^{(n)}(e) \simeq \varphi ^{(n)}(e)$. 

Next, we claim that for any $\alpha\in (0,1)$, we can find a complete 
Bernstein function $\psi=\psi_\alpha$ such that 
$\psi\sim  b \psi_0^{1/\alpha}$, 
$\psi'\sim (b/\alpha) \psi_0'\psi_0^{-1+(1/\alpha)}$. 
If we set $\bar{\psi} =(\psi)^\alpha$ it then follows that  
$\bar{\psi}\sim \bar{a}\psi_0$  and
$(\bar{\psi})'\sim \bar{a}\psi_0'.$  If such a function exists, then we have:
\begin{itemize}
\item[(a)] By construction and Theorem \ref{th-PPpol}, for all $g\in G$ and $f\in L^2(G)$, we have
$$\sum_{x\in G}|f(xg)-f(x)|^2\le C \theta_2(|g|)^{1/\alpha} \mathcal E_{\phi_\psi}(f,f).$$
\item[(b)] By construction, $\phi_{\bar{\psi}}$ 
is the $\alpha$-subordinate of $\phi_\psi$.
\item[(c)] Since $(\bar{\psi})'\sim \bar{a}\psi_0'$, we have 
$$\phi_{\bar{\psi}}(g)\simeq \frac{1}{(1+|g|)^D\ell(1+|g|^2)}\simeq 
\phi_{\psi_0}(g) \simeq \varphi(g)$$
and, by \cite{PSCstab}, $\phi_{\bar{\psi}}^{(2n)}(e)\simeq 
\phi_{\psi_0}^{(2n)}(e)\simeq 
\varphi^{(2n)}(e).$
\end{itemize}
Using (a)-(b) and Theorem \ref{th-W}, we obtain that 
$\widetilde{\Phi}_{G,\theta_2}(n)\ge c\phi_{\bar{\psi}}^{(2Nn)}(e)$. 
Then (c) gives the desired inequality, 
$\widetilde{\Phi}_{G,\theta_2}(n)\ge c\varphi^{(2Nn)}(e)$. 

We are left with the task of constructing the appropriate complete 
Bernstein function $\psi=\psi_\alpha$, for each $\alpha\in (0,1)$. 
Since we want that $(\psi)^\alpha \simeq \psi_0$, the 
simple minded choice is to try $\psi=\psi_0^{1/\alpha}$. 
Unfortunately, this is not always a complete 
Bernstein function (because $1/\alpha>1$).  
However, in the present case, $\psi_1=\psi_0^{1/\alpha}$ has derivative
$\psi_1'=\alpha^{-1}\psi_0'\psi_0^{-1+(1/\alpha)}$. Hence
$$\psi_1'(s)\sim  \frac{a^{-1+(1/\alpha)}}{\alpha s \ell(1/s) \theta_2^{(1/\alpha)-1}(1/s)}.$$
Since $t\mapsto \ell(t)\theta_2^{(1/\alpha)-1}(t)$ is a continuous increasing slowly varying function, the desired complete Bernstein function $\psi$ is provided by Theorem \ref{th-cpsi}. 
\end{proof}

Together, Theorem \ref{th-BSC} and Theorem \ref{th-mainl} provide  
sharp results for a wide variety of regularly varying moment conditions
ranging through the entire index range $[0,2)$ in the context of groups 
of polynomial volume growth (see \cite{SCZ0} for sharp results regarding 
the special case $\alpha=2$). The results of 
\cite{BSClmrw} also provide sharp result in the case $\alpha\in (0,2)$
for groups of exponential volume growth such that 
$\Phi_G(n)\simeq \exp(-n^{1/3})$ 
(this covers all polycyclic groups 
with exponential volume growth). 

Results regarding slowly 
varying moment conditions for a variety of classes of groups with 
super-polymonial volume growth require different techniques and 
will be discussed elsewhere.

\bibliographystyle{amsplain}

\providecommand{\bysame}{\leavevmode\hbox to3em{\hrulefill}\thinspace}
\providecommand{\MR}{\relax\ifhmode\unskip\space\fi MR }
\providecommand{\MRhref}[2]{%
  \href{http://www.ams.org/mathscinet-getitem?mr=#1}{#2}
}
\providecommand{\href}[2]{#2}

\end{document}